\def\cal#1{\mathcal{#1}}
\def\NZQ{\Bbb}               
\def\NN{{\NZQ N}}
\def\ZZ{{\NZQ Z}}
\def\PP{{\NZQ P}}
\def\PP{{\NZQ P}}
\def\frk{\frak}               
\def\mm{{\frk m}}
\def\opn#1#2{\def#1{\operatorname{#2}}} 
\opn\chara{char}
\opn\length{\ell}
\opn\pd{pd}
\opn\rk{rk}
\opn\projdim{proj\,dim}
\opn\rank{rank}
\opn\depth{depth}
\opn\grade{grade}
\opn\height{ht}
\opn\embdim{emb\,dim}
\opn\codim{codim}
\def\OO{\mathcal{O}}
\opn\Tr{Tr}
\opn\bigrank{big\,rank}
\opn\superheight{superheight}\opn\lcm{lcm}
\opn\trdeg{tr\,deg}%
\opn\reg{reg}
\opn\lreg{lreg}
\opn\div{div}
\opn\Div{Div}
\opn\WDiv{WDiv}
\opn\cl{cl}
\opn\Cl{Cl}
\opn\Spec{Spec}
\opn\Supp{Supp}
\opn\supp{supp}
\opn\Sing{Sing}
\opn\Ass{Ass}
\opn\Assh{Assh}
\opn\Min{Min}
\opn\Reg{Reg}
\opn\Ann{Ann}
\opn\Rad{Rad}
\opn\Soc{Soc}
\opn\Socle{Socle}
\opn\Ker{Ker}
\opn\Coker{Coker}
\opn\Im{Im}
\opn\Hom{Hom}
\opn\Mor{Mor}
\opn\Tor{Tor}
\opn\Ext{Ext}
\opn\End{End}
\opn\Aut{Aut}
\opn\id{id}
\opn\nat{nat}
\opn\pff{pf}
\opn\Pf{Pf}
\opn\GL{GL}
\opn\SL{SL}
\opn\mod{mod}
\opn\ord{ord}
\opn\Proj{Proj}
\opn\aff{aff}
\opn\con{conv}
\opn\relint{relint}
\opn\st{st}
\opn\lk{lk}
\opn\cn{cn}
\opn\core{core}
\opn\vol{vol}
\opn\link{link}
\opn\star{star}
\opn\gr{gr}
\def\pot#1#2{#1[\kern-0.28ex[#2]\kern-0.28ex]}
\opn\dirlim{\underrightarrow{\lim}}
\opn\inivlim{\underleftarrow{\lim}}
\let\dirsum=\oplus
\let\tensor=\otimes
\let\iso=\cong
\let\Union=\bigcup
\let\Dirsum=\bigoplus
\let\to=\rightarrow
\let\To=\longrightarrow
\let\oT=\longleftarrow
\def\Implies{\ifmmode\Longrightarrow \else
     \unskip${}\Longrightarrow{}$\ignorespaces\fi}
\def\implies{\ifmmode\Rightarrow \else
     \unskip${}\Rightarrow{}$\ignorespaces\fi}
\def\iff{\ifmmode\Longleftrightarrow \else
     \unskip${}\Longleftrightarrow{}$\ignorespaces\fi}
\opn\H{H}
\opn\Pic{Pic}
\newtheorem{Theorem}{Theorem}
\newtheorem{Corollary}[Theorem]{Corollary}
\newtheorem{Lemma}[Theorem]{Lemma}
\newtheorem{Proposition}[Theorem]{Proposition}
\newtheorem{Remark}[Theorem]{Remark}
\newtheorem{Definition}[Theorem]{Definition}
\let\epsilon\varepsilon
\def\OO{{\cal O}} 
\opn\inii{in}
\opn\inim{inm}
\opn\set{set}
\def\pnt{{\raise0.5mm\hbox{\large\bf.}}}
\begin{document}

\title{Raynaud-Mukai construction and 
Calabi-Yau Threefolds in Positive Characteristic}
\author{Yukihide Takayama}
\address{Department of Mathematical
Sciences, Ritsumeikan University, 
1-1-1 Nojihigashi, Kusatsu, Shiga 525-8577, Japan}
\email{takayama@se.ritsumei.ac.jp}

\def\Coh#1#2{H_{\mm}^{#1}(#2)}
\def\eCoh#1#2#3{H_{#1}^{#2}(#3)}

\newcommand{\AppTh}{Theorem~\ref{approxtheorem} }
\def\da{\downarrow}
\newcommand{\ua}{\uparrow}
\newcommand{\namedto}[1]{\buildrel\mbox{$#1$}\over\rightarrow}
\newcommand{\bdel}{\bar\partial}
\newcommand{\proj}{{\rm proj.}}

\newenvironment{myremark}[1]{{\bf Note:\ } \dotfill\\ \it{#1}}{\\ \dotfill
{\bf Note end.}}
\newcommand{\transdeg}[2]{{\rm trans. deg}_{#1}(#2)}
\newcommand{\mSpec}[1]{{\rm m\hbox{-}Spec}(#1)}

\newcommand{\tbf}{{{\Large To Be Filled!!}}}

\pagestyle{plain}
\maketitle

\def\gCoh#1#2#3{H_{#1}^{#2}\left(#3\right)}
\def\subsetneq{\raisebox{.6ex}{{\small $\; \underset{\ne}{\subset}\; $}}}
\opn\Exc{Exc}

\def\HHom{{\cal Hom}}

\begin{abstract}
In this article, we study the possibility of producing a Calabi-Yau
threefold in positive characteristic which is a counter-example to
Kodaira vanishing. The only known method to construct the
counter-example is so called inductive method such as the Raynaud-Mukai
construction or Russel construction. We consider Mukai's method and
its modification.  Finally, as an application of Shepherd-Barron
vanishing theorem of Fano threefolds, we 
compute $H^1(X, H^{-1})$ for any ample line bundle $H$ on a Calabi-Yau threefold $X$
on which Kodaira vanishing fails.
\end{abstract}

\section{Introduction}

Although every K3 surface in positive characteristic can be lifted to 
characteristic $0$ \cite{D},  there are some non-liftable Calabi-Yau 
threefolds, namely a smooth threefold $X$ with trivial canonical bundle 
and $H^1(X, \OO_X) = H^2(X,\OO_X)=0$. 
If a Calabi-Yau polarized 
threefold $(X,L)$ over the field $k$ of $\chara(k)=p\geq 3$
is a counter-example to Kodaira vanishing,
i.e., $H^i(X, L^{-1})\ne{0}$ for $i=1$ or $i=2$, 
$X$ is non-liftable to the second Witt vector ring $W_2(K)$ (and the Witt vector ring $W(k)$)
by the cerebrated Raynaud-Deligne-Illusies version of Kodaira vanishing theorem \cite{DI}.
But this does not necessarily imply that $X$ cannot be liftable to characteristic~$0$.
Moreover, a non-liftable variety is  not necessarily a counter-example to
Kodaira vanishing and 
as far as the author is aware, it is not 
known whether Kodaira vanishing holds for the non-liftable Calabi-Yau 
threefolds \cite{Hi99,Hi07,Hi08,Sh03,Eke05,CS} that have been found so far.
We do not even know whether Kodaira vanishing holds for all Calabi-Yau
threefolds.
Thus Kodaira type vanishing for Calabi-Yau threefolds is an interesting 
problem, which is independent from but seems to be closely related to
the lifting problem.

A counter-example to Kodaira vanishing has been given by M.~Raynaud,
which is a surface over a curve \cite{Ray}. 
This example was extended to arbitrary dimension by S.~Mukai
\cite{Mu79, Mu05}, which we will call the Raynaud-Mukai construction or,
simply, Mukai construction.  

The idea is,  so to say, an inductive construction. Namely, we start from 
a polarized smooth curve $(C, D)$. The ample divisor $D$ satisfies a 
special condition, which is a sufficient condition for the non-vanishing
$H^1(X, \OO_X(-D))\ne{0}$, and called a (pre-)Tango
structure. Then we give an algorithm to construct from a 
variety $X$  with a (pre-)Tango structure $D$ a new  variety
$\tilde{X}$ with a higher dimensional (pre-)Tango structure $\tilde{D}$
such that $\dim\tilde{X} = \dim X + 1$, using cyclic cover technique.
There is another way of constructing counter-examples using quotient
of $p$-closed differential forms \cite{russ, TakeCol}).
But this is also an inductive construction and the obtained
varieties are the same as the Raynaud-Mukai construction \cite{TakeCol}.
As far as the author is aware, non-inductive
construction of higher dimensional counter-examples 
is not yet found.

In this paper, we consider the problem of whether 
we can construct a Calabi-Yau threefold
with Kodaira non-vanishing by Mukai construction or by its
modification. 
Section~2 presents the Raynaud-Mukai construction.
For $p\geq 5$, Raynaud-Mukai varieties are of general type
so that the only possibility resides in the cases of $p=2, 3$.
Then in section~3, we will see that Mukai construction does not produce any 
K3 surfaces or Calabi-Yau threefolds (Corollary~\ref{neverK3} and 
Corollary~\ref{trivialK}). 
Then we consider possible modifications
of the  Raynaud-Mukai construction: we keep the inductive construction but
give up obtaining a (pre-)Tango structure. We show that 
if there exists a surface $X$ of general type together with 
a (pre-)Tango structure $D$ satisfying some property 
(this is not obtained  by Mukai construction), we 
can construct a Calabi-Yau threefold $\tilde{X}$
with a (pre-)Tango structure $\tilde{D}$
(Corollary~\ref{requiredsurface}) and 
describe the cohomology $H^1(\tilde{X}, \OO_{\tilde{X}})$ 
in certain situations (Proposition~\ref{h1ofCY3fold}).
Unfortunately, we could not prove or disprove 
existence of such a polarized surface $(X, D)$.

Finally, in section~3 we show that if Kodaira non-vanishing 
$H^1(X, L^{-1})\ne{0}$ holds for a polarized Calabi-Yau threefold 
$(X,L)$ over the field $k$ of $\chara{k}=p\geq 5$
satisfying the condition that $L^\ell$ is a Tango-structure for some $\ell\geq 1$, 
we compute the cohomology $H^1(X, H^{-1})$ for any 
ample line bundle $H$ of $X$ (Theorem~\ref{main}, Corollary~\ref{mainsub}).

\section{The Raynaud-Mukai construction}

In this section, we present the Raynaud-Mukai construction.  Although
\cite{Mu05} is available now, we prefer to use the version described
in \cite{Mu79}, which is slightly different from the 2005 version. As
1979 version is only available in Japanese, we present some
details for the readers convenience.

The idea is to construct from a counter-example 
to Kodaira vanishing, i.e., a polarized variety $(X, L)$
with $H^1(X, L^{-1})\ne{0}$ a new counter-example 
$(\tilde{X}, \tilde{L})$ with $\dim\tilde{X}=\dim{X}+1$.
This inductive construction starts from 
a polarized curve $(X, L)$ called a Tango-Raynaud curve.

\subsection{pre-Tango structure and Kodaira non-vanishing}

\begin{Definition}[pre-Tango structure]
\label{def:pretango}
Let $X$ be a smooth projective variety. Then an ample divisor $D$,
or an ample line bundle $L = \OO_X(D)$, is called a {\em pre-Tango
structure} if there exists 
an element $\eta\in k(X)\backslash k(X)^p$, where $k(X)$ denotes the function
field of $X$, such that the K\"ahler differential
is $d\eta \in \Omega_X(-pD)$, which will be simply denoted as $(d\eta)\geq pD$.
In this paper, the element $\eta$ will be called a {\em justification} 
of the pre-Tango structure.
\end{Definition}

Existence of a pre-Tango structure implies Kodaira non-vanishing.
In fact, consider the absolute Frobenius morphism
\begin{equation*}
   F :  \OO_X(-D) \To \OO_X(-pD)
\end{equation*}
such that $F(a) = a^p$ for $a \in \OO_X$ and set $B_X(-D) := \Coker{F}$.
Then we have 
\begin{equation*}
0\To H^0(X, B_X(-D)) \To H^1(X, \OO_X(-D))\overset{F}{\To} H^1(X, \OO_X(-pD))
\end{equation*}
and then we can show 
\begin{Proposition}$H^0(X, B_X(-D)) 
= \left\{ df \in k(X)\;\vert\; (df)\geq pD\right\}$.
\end{Proposition}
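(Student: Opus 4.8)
The plan is to realize $B_X(-D)$ explicitly as a subsheaf of the sheaf of rational $1$-forms via the de Rham differential, and then read off its global sections. First I would record the short exact sequence of sheaves
\[ 0 \To \OO_X(-D) \overset{F}{\To} \OO_X(-pD) \To B_X(-D) \To 0, \]
which is legitimate because $(s)\geq D$ forces $(s^p)=p(s)\geq pD$, so $F\colon s\mapsto s^p$ does carry sections of $\OO_X(-D)$ to sections of $\OO_X(-pD)$. The central observation is that, the ground field being perfect, the absolute differential detects $p$-th powers: for a local section $g$ of $\OO_X(-pD)$ one has $dg=0$ in $\Omega_{k(X)/k}$ if and only if $g\in k(X)^p$, and if $g=h^p$ with $(g)\geq pD$ then $(h)=\tfrac1p(g)\geq D$, i.e. $h$ is a local section of $\OO_X(-D)$. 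Hence the kernel of $d$ restricted to $\OO_X(-pD)$ is exactly $F(\OO_X(-D))$, so $d$ induces an injection $B_X(-D)\hookrightarrow\Omega_X$.

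Next I would pin down the twist of the target, proving the sheaf inclusion $d\,\OO_X(-pD)\subseteq\Omega_X(-pD)$ by a valuation count along each prime divisor $P$. Writing $g=u\,t^{m}$ with $t$ a local equation of $P$, $u$ a unit and $m=\ord_P(g)\geq p\,a_P$, one has $dg=t^{m}\,du+m\,u\,t^{m-1}\,dt$: when $p\mid m$ the second term vanishes and $\ord_P(dg)\geq m\geq p\,a_P$, while when $p\nondiv m$ the bound $m\geq p\,a_P$ sharpens to $m\geq p\,a_P+1$ (since $m=p\,a_P$ would give $p\mid m$), which exactly absorbs the loss from the unit $m$ so that $\ord_P(dg)=m-1\geq p\,a_P$. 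Thus $B_X(-D)\cong d\,\OO_X(-pD)$ as a subsheaf of $\Omega_X(-pD)$, and it remains to identify its global sections with $\{df : f\in k(X),\ (df)\geq pD\}$.

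For the inclusion $\subseteq$ the argument is immediate: a global section $\omega$ of $d\,\OO_X(-pD)$ is, on some nonempty open $U$, of the form $\omega=dg$ with $g\in\OO_X(-pD)(U)\subseteq k(X)$; since $g$ is then a global rational function and two rational forms agreeing on a dense open coincide, we get $\omega=df$ globally with $f:=g\in k(X)$, while $(df)=(\omega)\geq pD$ because $\omega\in H^0(X,\Omega_X(-pD))$. The opposite inclusion $\supseteq$ is where the real work lies, and I expect it to be the \emph{main obstacle}: given $f\in k(X)$ with $(df)\geq pD$ we have $df\in H^0(X,\Omega_X(-pD))$, but to exhibit it as a section of the subsheaf $d\,\OO_X(-pD)=B_X(-D)$ I must show that near every point $f$ can be altered by a local $p$-th power into a $g$ with $(g)\geq pD$ and $dg=df$.

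Concretely, one expands $f$ along each prime divisor: any term of order $<p\,a_P$ whose exponent is not divisible by $p$ would force a term of order $<p\,a_P$ into the $dt$-component of $df$, contradicting $(df)\geq pD$; hence every surviving low-order term has exponent divisible by $p$, and the constraint coming from the transverse components of $df$ forces its coefficient to have vanishing differential, so by perfectness it is a $p$-th power. By additivity of Frobenius these terms assemble into a single local $p$-th power $h^p$, and $g:=f-h^p$ then satisfies $(g)\geq pD$ with $dg=df$. Carrying out this clean-up simultaneously at all prime divisors through a given point, and in particular handling the possibly imperfect residue field at the generic point of $P$ by using the remaining local coordinates to supply the extra constraint, is the delicate step; once it is established, $df$ is a local section of $d\,\OO_X(-pD)=B_X(-D)$ and the two descriptions of $H^0(X,B_X(-D))$ coincide.
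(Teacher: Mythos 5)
The paper states this Proposition without proof, so you are being compared against the standard argument. Your reductions are sound: the exact sequence, the identification of $\Ker(d)\cap\OO_X(-pD)$ with $F(\OO_X(-D))$ (valid because $k(P)$-issues do not enter there), the valuation count giving $d\,\OO_X(-pD)\subseteq\Omega_X(-pD)$, and the inclusion $H^0(X,B_X(-D))\subseteq\{df \mid (df)\geq pD\}$ are all correct. But the reverse inclusion, which you yourself flag as the main obstacle, is not actually proved, and there are two genuine gaps in what you outline. The first: the correction term $h$ you build by "expanding $f$ along a prime divisor $P$" has coefficients in a coefficient field of the completion $\widehat{\OO}_{X,P}$, and such a field does \emph{not} embed into $k(X)$; so $g=f-h^p$ is a priori only an element of the completed field, not a rational function, hence not a candidate section of $\OO_X(-pD)$. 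This can be repaired, e.g.\ by approximation ($k(X)$ is dense in its completion at $\ord_P$, and $h^p-h'^p=(h-h')^p$ keeps the error of the right order), or more cleanly by avoiding completions altogether: induct on $\ord_P(f)$, writing the leading coefficient $\bar u\in k(P)$ as $\bar w^p$ --- which uses $\Ker\bigl(d\colon k(P)\to\Omega_{k(P)/k}\bigr)=k(P)^p$, valid because $k(P)$ is finitely generated over a perfect field, \emph{not} because $k(P)$ is perfect --- and subtracting $(w\,t^{m/p})^p$ for a lift $w\in\OO_{X,P}$, which stays inside $k(X)$.

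The second gap is more serious: the passage from "one prime divisor at a time" to "a neighborhood of a point $x$" is deferred with "once it is established", and the direct simultaneous clean-up you envisage does not obviously work --- approximation at the finitely many bad divisors through $x$ gives no control over the \emph{new} polar divisors of $h'$ through $x$, so $f-h'^p$ can fail $(g)\geq pD$ along divisors not on your list. The missing idea is structural: $B_X(-D)\cong(F_*\OO_X/\OO_X)\otimes\OO_X(-D)$ is a locally free $\OO_X$-module (on a smooth variety over a perfect field $F_*\OO_X$ is locally free and $\OO_X$ sits inside it as a local direct summand), hence reflexive, so a rational section of $\Omega_X(-pD)$ lying in every codimension-one stalk of $B_X(-D)$ is a genuine section; this reduces everything to the DVR statement above. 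Alternatively, the same local freeness yields a one-stroke proof that subsumes both gaps: in \'etale coordinates $t_1,\dots,t_n$ near $x$ one has the Frobenius decomposition $k(X)=\bigoplus_{0\leq\alpha_i\leq p-1}k(X)^p\,t^\alpha$, and $(g)\geq pD$ near $x$ if and only if every component $g_\alpha^pt^\alpha$ satisfies this; writing $df=\sum_i\bigl(\sum_\alpha \alpha_i f_\alpha^p t^{\alpha-e_i}\bigr)dt_i$ and applying this criterion to each coefficient of $dt_i$ (the scalars $\alpha_i$ are nonzero elements of the prime field) forces $(f_\alpha)\geq D$ near $x$ for all $\alpha\neq 0$, so $g:=f-f_0^p$ is a local section of $\OO_X(-pD)$ with $dg=df$. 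Without one of these inputs your outline does not close.
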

Thus, if there exists a pre-Tango structure $D$ and $\dim{X}\geq 2$,
then we have Kodaira non-vanishing: $H^1(X, \OO_X(-D))\ne{0}$.

Notice that the inclusion $H^0(X, B_X(-D)) \subset H^1(X, \OO_X(-D))$
may be strict, so that there is a possibility that a non pre-Tango
structure $L$ causes a Kodaira non-vanishing. However, since 
the iterated Frobenius map
\begin{equation*}
   F^{e} : H^1(X, L^{-1}) \To H^1(X, L^{-p^e})
\end{equation*}
is trivial for $e\gg 0$, $L^{n}$ is a pre-Tango structure for 
sufficiently large $n\in\NN$.

Pre-Tango structure for curves are characterized by 
the Tango-invariant \cite{Tango72, Tango72k}. 
Let $C$ be a smooth projective curve of genus $g\; (\geq 2)$. Then 
the Tango-invariant is defined as 
\begin{equation*}
 n(C) = \max\left\{
           \deg\left[\frac{df}{p}
           \right]
           \;:\;
             f\in k(X)/k(X)^p
         \right\}
\end{equation*}
where $[\cdots]$ denotes the round up.
We easily know that
\begin{equation*}
     0\leq n(C) \leq \frac{2(g-1)}{p}.
\end{equation*}
Then, $C$ has a pre-Tango structure $D$ if $n(C)>0$.
We just set $D = \left[\frac{(df)}{p}\right]$
and then $D$ is ample on $C$
such that $(df) \geq pD$.

In the following, we will call the pair $(X, L)$ in
Definition~\ref{def:pretango} a {\em pre-Tango polarization}.
The Raynaud-Mukai construction is an algorithm to make a new pre-Tango
polarization from a pre-Tango polarization whose dimension is lower by
one.

\subsection{purely inseparable cover}

From a pre-Tango polarized variety $(X, L)$ we can construct 
a reduced and irreducible purely inseparable cover $\tau : G \To X$ of degree $p$. 
Conversely, existence of such a cover implies existence of a pre-Tango polarization.

\subsubsection{Construction and characterization} 

Given a pre-Tango polarized variety $(X, L=\OO_X(D))$,
choose an element 
$(0\ne)\eta \in H^0(X, B_X(-D)) 
(= \Ker F)$.
Then we have 
a corresponding non-split short exact sequence
\begin{equation}
\label{extension}
  0 \To \OO_X \To E \To L \To 0
\end{equation}
where $E$ is a rank~$2$ vector bundle on $X$. 
Taking the Frobenius pull-back, we obtain an exact sequence
\begin{equation*}
  0 \To \OO_X \To E^{(p)} \To L^{(p)} \To 0.
\end{equation*}
where, for example, $E^{(p)} = E\tensor_{\OO_X}\OO_{X'}$ 
with $F: \OO_{X}\To \OO_{X'}$ the Frobenius morphism.
Notice that the new sequence corresponds to $F(\eta) =0$
so that it splits and by using the split maps, we obtain the 
sequence with the reverse arrows
\begin{equation*}
  0 \oT \OO_X \oT E^{(p)} \oT L^{(p)} \oT 0.
\end{equation*}
Tensoring by ${L^{(p)}}^{-1}$ over $\OO_X$, we finally obtain
the sequence 
\begin{equation}
\label{f-extension}
  0 \To \OO_X \To E^{(p)}\tensor {L^{(p)}}^{-1} \To {L^{(p)}}^{-1} \To 0.
\end{equation}
Now we consider the $\PP^1$-fibration
\begin{equation*}
\pi: P = \PP(E)\To X
\end{equation*}
together with the canonical section $F\subset P$, which is 
defined by the image of $1\in \OO_X$ in $E$, and 
\begin{equation*}
\pi^{(p)}: P^{(p)}=\PP(E^{(p)}\tensor {L^{(p)}}^{-1})
\iso \PP(E^{(p)}) \To X
\end{equation*}
together with the canonical section $F^{(p)} \subset P^{(p)}$ 
which is the image of $1\in\OO_X$ in $E^{(p)}\tensor {L^{(p)}}^{-1}$,
corresponding to (\ref{extension}) and (\ref{f-extension}).
Moreover, we consider the relative Frobenius morphism 
$\psi : P \To P^{(p)}$ over $X$. On an open set $U\subset X$
such that $E\vert_U \iso \OO_U^{r}$ with $r =\rank{E}$, 
$\psi$ is induced by the local morphism 
$E^{(p)}\vert_U\iso \OO_U^r\tensor_{\OO_U}\OO_{U'}\to E\vert_U$ sending
$\sum_{i=1}^r a_i\tensor f = \sum_i 1\tensor a_i^pf\in \OO^r_{U}$ to $\sum_i a_i^pf
\in \OO^r_{U}$. Thus, on a fiber $\pi^{-1}(x) \iso \PP^1$,
$\psi\vert_{\pi^{-1}(x)} : \pi^{-1}(x) \To {\pi^{(p)}}^{-1}(x)$
is the Frobenius pull-back, i.e., $\psi (a,b) = (a^p, b^p)$
for every projective coordinate $(a,b) \in \pi^{-1}(x)$.
Now consider the scheme theoretic inverse image of $F^{(p)}$
inside $P$:
\begin{equation*}
      G := \psi^{-1}(F^{(p)}) \subset P
\end{equation*}
Then we can show 
\begin{Proposition} \label{picprop}
\begin{enumerate}
\item $G \cap F = \emptyset$,
\item  $\OO_P(G) \iso \OO_P(p)\tensor \pi^*{L^{-p}} \iso \OO_P( p F - p\pi^{*}D)$,
and 
\item $\rho = \pi\vert_{G} : G \To X$ is a purely inseparable cover 
of degree $p$.
\end{enumerate}
\end{Proposition}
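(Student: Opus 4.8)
The plan is to exploit the one genuinely nontrivial input already in hand, namely that although the extension (\ref{extension}) is non-split, its Frobenius pullback (\ref{f-extension}) splits because $F(\eta)=0$; everything else is intersection theory on the ruled variety $P$ combined with the standard properties of the relative Frobenius $\psi$. Throughout I would use that $\psi$ is a morphism over $X$, so that $\pi=\pi^{(p)}\circ\psi$ and $\psi^{*}\pi^{(p)*}=\pi^{*}$, and that on each fibre $\psi$ is the degree-$p$ Frobenius of $\PP^1$.

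For (1) I would first identify the two sections of $P^{(p)}$ that are in play. The image $\psi(F)$ is the section attached to the Frobenius pullback of the sub-bundle $\OO_X\subset E$, that is, to $\OO_X\subset E^{(p)}$; after the untwisting $\PP(E^{(p)}\tensor{L^{(p)}}^{-1})\iso\PP(E^{(p)})$ the section $F^{(p)}$ is attached to the sub-bundle $L^{(p)}\subset E^{(p)}$. Since (\ref{f-extension}) splits we have $E^{(p)}\iso\OO_X\dirsum L^{(p)}$, and these two sub-bundles are exactly the two summands, so $\psi(F)$ and $F^{(p)}$ are the two complementary, disjoint sections of $P^{(p)}$. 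As any point of $F\cap G$ would map under $\psi$ into $\psi(F)\cap F^{(p)}=\emptyset$, this forces $F\cap G=\emptyset$. This is the step in which the splitting, hence the choice $\eta\in\Ker F$, is indispensable.

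For (2) I would compute the class of the Cartier divisor $G=\psi^{-1}(F^{(p)})=\psi^{*}F^{(p)}$. Using the splitting, $F^{(p)}$ is a section of a decomposable $\PP^1$-bundle, so $[F^{(p)}]$ differs from the tautological class of $P^{(p)}$ by a pullback from $X$ determined by its normal bundle $\Hom(\OO_X,{L^{(p)}}^{-1})$, which I read off from $c_1(L^{(p)})=pD$. Pulling back along $\psi$, the fibrewise degree-$p$ property multiplies the tautological class by $p$ while $\psi^{*}\pi^{(p)*}=\pi^{*}$ leaves the base part unchanged; keeping track of the twist by ${L^{(p)}}^{-1}$ in the definition of $P^{(p)}$ then gives $\OO_P(G)\iso\OO_P(p)\tensor\pi^{*}L^{-p}$. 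Rewriting $\pi^{*}L^{-p}=\OO_P(-p\pi^{*}D)$ together with the relation $\OO_P(1)\iso\OO_P(F)$ between the tautological bundle and the canonical section yields the stated form $\OO_P(pF-p\pi^{*}D)$. This step is essentially bookkeeping once a convention relating $\OO_P(1)$ and $F$ is fixed.

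For (3) I would factor $\rho$. Because $\psi$ is over $X$ and $F^{(p)}$ is a section, $\pi^{(p)}|_{F^{(p)}}:F^{(p)}\To X$ is an isomorphism and $\rho=\pi|_G$ is the composite of $\psi|_G:G\To F^{(p)}$ with it. Now $\psi|_G$ is the base change of $\psi$ along $F^{(p)}\hookrightarrow P^{(p)}$, and the relative Frobenius of the smooth $\PP^1$-bundle $P\To X$ is finite, flat, and purely inseparable of degree $p$; these properties are stable under base change, so $\rho$ is a finite purely inseparable cover of degree $p$. The main obstacle is not the degree but the assertion that $G$ is reduced and irreducible. Irreducibility is cheap, since a finite purely inseparable morphism is a universal homeomorphism, so $|G|\iso|X|$ and $G$ inherits irreducibility from $X$. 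Reducedness is where $\eta\in k(X)\setminus k(X)^{p}$ must enter: being a Cartier divisor on the smooth $P$, $G$ is Cohen--Macaulay, hence satisfies $S_1$, so it is reduced as soon as it is generically reduced, and it therefore suffices to examine $G$ over the generic point of $X$. In a local trivialization coming from the splitting, $G$ is there cut out by an equation $t^{p}=a$ with $a\in k(X)$ a local avatar of the justification $\eta$, and $k(X)[t]/(t^{p}-a)$ is reduced (indeed a field) precisely when $a\notin k(X)^{p}$, which is guaranteed by $\eta\notin k(X)^{p}$. Making this identification of $a$ with $\eta$ precise, so that ``non-$p$-th-power'' becomes ``generically reduced'', is the genuinely delicate point and where I would concentrate the effort.
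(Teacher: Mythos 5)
The paper never actually proves Proposition~\ref{picprop}: it is introduced with ``we can show'' and deferred to Mukai (Proposition~1.1 of \cite{Mu05}), so there is no in-paper argument to measure you against line by line. Judged on its own, your plan is correct, and where it touches material the paper does contain, it agrees with it. Parts (1) and (2) are complete as written: the splitting $E^{(p)}\iso \OO_X\dirsum L^{(p)}$ --- which is exactly where the hypothesis $\eta\in\Ker F$ enters --- makes $\psi(F)$ and $F^{(p)}$ the two complementary, hence disjoint, sections, forcing $F\cap G=\emptyset$; and the class computation $\OO_P(G)=\psi^*\OO_{P^{(p)}}(F^{(p)})$, with $\psi^*$ multiplying the tautological class by $p$ and the twist by ${L^{(p)}}^{-1}$ contributing $\pi^*L^{-p}$, is routine bookkeeping once one fixes the convention $\OO_P(F)\iso\OO_P(1)$ that the statement itself presupposes.

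In (3) you have correctly isolated the one point of real content: reducedness of $G$. This is genuinely non-formal --- the fibre of a relative Frobenius over a rational point is in general a length-$p$ thickened point, so no amount of base-change formalism can give reducedness --- and your reduction (Cartier divisor in the smooth $P$, hence Cohen--Macaulay, hence $S_1$, hence reduced iff reduced over the generic point of $X$) is the right one. The identification you leave open, namely that over the generic point $G$ is cut out by $t^p-a$ with $a$ equal to $\eta$ up to $p$-th powers and units, is precisely the local description the paper itself invokes later, in the proof of Proposition~\ref{newTangStr}, citing Proposition~1 of \cite{T10}: writing $D=\{(U_i,g_i)\}$ and $\eta\vert_{U_i}=g_i^pc_i$, one has $G=\Proj\,\OO_{U_i}[x,y]/(c_ix^p+y^p)$ locally, so the generic equation is $t^p+c_i$ with $c_i=\eta/g_i^p$, and $-c_i\in k(X)^p$ if and only if $\eta\in k(X)^p$. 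So the step you flag as delicate is a finite cocycle computation already on record, not a missing idea; with it inserted, your argument is a complete proof of all three assertions.
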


We can show that existence of such a $G$ characterizes pre-Tango structure.
To summarize, we have 

\begin{Theorem}[See Proposition~1.1 in \cite{Mu05}]
\label{Mu05:prop1.1}
Let $X$ be a smooth projective variety of characteristic $p>0$ and 
$L$ be an ample line bundle. Then the following are equivalent:
\begin{enumerate}
\item $L$ is a pre-Tango structure.
\item There exists a $\PP^1$-bundle $\pi : P \To X$ and a reduced 
irreducible effective divisor $G \subset P$ such that 
\begin{enumerate}
\item $\rho : G \To X$ is a purely inseparable cover of degree $p$
\item $P = \PP(E)$ where $E$ is a rank~2 vector bundle on $X$ such that 
\begin{equation*}
    0 \To \OO_X \To E \To L \To 0
\end{equation*}
\end{enumerate}
\end{enumerate}
\end{Theorem}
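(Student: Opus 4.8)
The plan is to prove the two implications separately. The direction (1)$\Rightarrow$(2) is essentially a repackaging of the construction preceding Proposition~\ref{picprop}, while (2)$\Rightarrow$(1) carries the real content.

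For (1)$\Rightarrow$(2), assume $L=\OO_X(D)$ is a pre-Tango structure. I would choose a nonzero class $\eta\in H^0(X,B_X(-D))=\Ker F\subset H^1(X,L^{-1})$. Since $H^1(X,L^{-1})=\Ext^1(L,\OO_X)$, this class defines a non-split extension (\ref{extension}), giving the rank-$2$ bundle $E$ demanded in (2)(b). Setting $P=\PP(E)$ and $G=\psi^{-1}(F^{(p)})$ exactly as in the construction before Proposition~\ref{picprop}, part~(3) of that proposition yields (2)(a). Thus this direction needs no argument beyond what has already been set up.

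For (2)$\Rightarrow$(1), I start from the extension in (2)(b) and the cover $G$, and aim to manufacture a justification in the sense of Definition~\ref{def:pretango}. The first step is to push $G$ forward by the relative Frobenius $\psi\colon P\to P^{(p)}$: because $\psi$ is the degree-$p$ Frobenius on each fibre and $\rho\colon G\to X$ is purely inseparable of degree $p$, the scheme $G$ meets a general fibre in a single length-$p$ purely inseparable subscheme, whose image is one reduced point. Hence $\Sigma:=\psi(G)$ is a section of $\pi^{(p)}$, with $G\to\Sigma$ purely inseparable of degree $p$ and $\Sigma\to X$ an isomorphism, and a relative-degree count gives $G=\psi^{-1}(\Sigma)$ scheme-theoretically. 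Reducedness of $G$ forces $\Sigma\ne F^{(p)}$, since $\psi^{-1}(F^{(p)})$ is the non-reduced divisor $pF$. The second step is to read off a rational function: in local fibre coordinates $t$ on $P$ with $\psi^*\tau=t^p$, the section $\Sigma$ is $\{\tau=\eta\}$ and $G$ is cut out by $t^p-\eta$; the transition functions of $P=\PP(E)$ glue the local $\eta$ into an element $\eta\in k(X)$, and irreducibility together with reducedness of $G$ forces $\eta\notin k(X)^p$. The third step is the pole estimate: the presence of $L$ in (\ref{extension}), equivalently the divisor-class identity $\OO_P(G)\iso\OO_P(p)\otimes\pi^*L^{-p}$ of Proposition~\ref{picprop}(2), bounds the poles of the glued data so that $d\eta\in\Omega_X(-pD)$, i.e. $(d\eta)\geq pD$. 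Then $\eta$ is a justification and $L$ is a pre-Tango structure. As a consistency check this matches the cohomological picture: $\Sigma\ne F^{(p)}$ translates into the splitting of the Frobenius pull-back (\ref{f-extension}), i.e. $F(\xi)=0$ for the class $\xi$ of (\ref{extension}), placing $\xi$ in $\Ker F=H^0(X,B_X(-D))=\{\,df\mid (df)\geq pD\,\}$ by the Proposition preceding the theorem.

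I expect the main obstacle to be the third step: verifying that the twist by $L$ built into the extension bounds the poles of $\eta$ \emph{precisely} so that $(d\eta)\geq pD$, rather than merely producing some rational function with $\eta\notin k(X)^p$. This requires pinning down the divisor class $\OO_P(G)=\OO_P(p)\otimes\pi^*M$ — showing $M=L^{-p}$, equivalently deriving $G\cap F=\emptyset$ from the purely inseparable structure — and then tracking, through the gluing of the local equations $t^p-\eta$, how the $L$-twisted transition data of $E$ controls the polar divisor of $d\eta$. Keeping this bookkeeping consistent with Proposition~\ref{picprop}(2) is the computational heart of the converse.
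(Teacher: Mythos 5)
Your direction (1)$\Rightarrow$(2) is exactly the construction the paper gives before the theorem (pick $0\neq\eta\in H^0(X,B_X(-D))=\Ker F$, form the extension (\ref{extension}), set $G=\psi^{-1}(F^{(p)})$, quote Proposition~\ref{picprop}), so there is nothing to object to there; note, though, that the paper itself never proves this theorem at all --- it is quoted from Proposition~1.1 of \cite{Mu05}, and even Proposition~\ref{picprop} is stated without proof. The substance of your proposal is therefore your sketch of (2)$\Rightarrow$(1), and that sketch has a genuine gap: the step you yourself isolate as ``the computational heart'', the pole estimate $(d\eta)\geq pD$, is never carried out, and the route you propose for it --- ``deriving $G\cap F=\emptyset$ from the purely inseparable structure'', equivalently $\OO_P(G)\iso\OO_P(p)\tensor\pi^*L^{-p}$ --- cannot succeed, because hypotheses (2)(a)--(b) as stated do not imply it.

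Here is a concrete obstruction. Take $X=\PP^2$, $L=\OO_{\PP^2}(1)$, $E=\OO_X\dirsum L$ (the split sequence still satisfies (2)(b)). Let $e_1$ be the section of $\OO_P(1)$ induced by $\OO_X\subset E$, let $e_2$ be the section of $\OO_P(1)\tensor\pi^*L^{-1}$ induced by $L\subset E$, and let $\mu\in H^0(\PP^2,\OO_{\PP^2}(p))$ be a degree-$p$ form that is not the $p$-th power of a linear form. Then $G:=\{e_1^p+\mu e_2^p=0\}\subset P=\PP(E)$ is a reduced irreducible divisor containing no fiber, and $\rho\colon G\To X$ is a finite purely inseparable cover of degree $p$ (its generic fiber is $\Spec k(X)[t]/(t^p+1/m)$ with $m=\mu/z^p\notin k(X)^p$), so (2) holds; but $\PP^2$ admits no pre-Tango structure whatsoever, since for any $\eta\in k(\PP^2)\setminus k(\PP^2)^p$ the divisor $(d\eta)$ lies in the canonical class, of degree $-3$, and so can never satisfy $(d\eta)\geq pD$ (equivalently, a pre-Tango structure would force $H^1(\PP^2,\OO_{\PP^2}(-1))\neq 0$). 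In this example $G$ meets the canonical section $F=\{e_1=0\}$ along $\{\mu=0\}$, and $\OO_P(G)=\OO_P(p)$ rather than $\OO_P(p)\tensor\pi^*L^{-p}$. So the disjointness $G\cap F=\emptyset$ (equivalently the divisor class in Proposition~\ref{picprop}(2)) is an independent condition that must be taken as part of (2) --- it is an output of the forward construction, not a consequence of (2)(a)--(b) --- and any correct proof of the converse has to use it. Once it is assumed, your plan does work: writing $G$ locally as $\lambda_i e_1^p+\mu_i\epsilon_i^p=0$ with $\epsilon_i$ a local lift of a generator of $L$, disjointness makes each $\mu_i$ a unit, the local data $\eta_i=-\lambda_i/\mu_i$ are regular and transform by substitutions $\eta_j=u_{ij}^p\eta_i+v_{ij}^p$ with $u_{ij}$ the transition units of $L^{-1}$, so the $d\eta_i$ glue to a regular section of $\Omega_X\tensor L^{-p}$, and the generic-chart representative $\eta$ satisfies $(d\eta)\geq pD''$ with $D''\sim D$; replacing $\eta$ by $g^p\eta$ for suitable $g\in k(X)$ gives a justification. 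Two further slips to repair in your write-up: your claim ``$\psi^{-1}(F^{(p)})$ is the non-reduced divisor $pF$'' confuses $F^{(p)}$ with $\psi(F)$ (in the paper's notation $\psi^{-1}(F^{(p)})$ is $G$ itself, which is reduced), and the local functions $\eta_i$ do not ``glue into an element of $k(X)$'' --- they only transform as above, so $\eta$ must be defined by fixing a trivialization at the generic point.
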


\subsubsection{smoothness} 

For smoothness of the purely inseparable cover $G$, we have 

\begin{Theorem}[S.\ Mukai~\cite{Mu05}]
\label{smoothnessofG}
Let $(X, D)$ be a pre-Tango polarized variety over the field of 
characteristic $p>0$ and $G$ is the purely inseparable cover 
constructed from  a justification $(0\ne)\eta\in k(X)\backslash k(X)^p$.  
Then $G$ is smooth if and only if
$(d\eta) = p D$.
This means that for the multiplication by $d\eta$
\begin{equation*}
   \OO_X(pD)\overset{d\eta}{\To} \Omega_X \To \Coker(d\eta),
\end{equation*}
$\Coker{d\eta}$ is locally free at every $x\in X$.
\end{Theorem}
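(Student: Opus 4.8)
The plan is to reduce the statement to a Jacobian computation in a single affine chart, after writing down an explicit local equation for $G$. Since $\rho : G \To X$ is purely inseparable of degree $p$, hence finite and a homeomorphism onto $X$, smoothness of $G$ may be tested over each closed point of $X$. So I would fix a closed point $x\in X$ and an affine open $U\subset X$ carrying a regular system of parameters $t_1,\dots,t_n$ (with $n=\dim X$) on which $L$ is trivial. Over such a $U$ the extension (\ref{extension}) splits, so I may write $E\vert_U=\OO_U e_0\oplus\OO_U e_1$ with $e_0$ the image of $1\in\OO_X$, and choose the fibre coordinate $u$ on $P=\PP(E)$ for which the canonical section $F$ is the locus $u=\infty$. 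By Proposition~\ref{picprop}(1) we have $G\cap F=\emptyset$, so the whole of $G\vert_U$ then lies inside the affine chart $U\times\AA^1_u$, and smoothness of $G$ can be checked there.

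First I would unwind the relative Frobenius description $G=\psi^{-1}(F^{(p)})$ in this chart. Over $U$ the Frobenius pull-back sequence $0\to\OO_X\to E^{(p)}\to L^{(p)}\to 0$ splits, and choosing \v{C}ech data $\{U_i\}$ with $L$-transitions $g_{ij}$ and a cocycle $(\eta_{ij})$ representing $\eta\in H^1(X,L^{-1})$, this splitting is recorded by local functions $\phi_i$ with $\phi_i-\phi_j=\eta_{ij}^p$ --- precisely the cochain exhibiting the chosen section of $B_X(-D)$. Tracing the section $F^{(p)}\subset P^{(p)}$ back through $\psi$, which on fibres is $u\mapsto u^p$, I expect to obtain the local equation
\begin{equation*}
  G\vert_{U_i}=\{\,u^p-\phi_i=0\,\}\subset U_i\times\AA^1_u .
\end{equation*}
The one genuinely delicate point, and the main obstacle, is this identification of the constant term: I must verify that the $\phi_i$ coming from the splitting of the Frobenius-pulled-back extension are exactly the functions with $\phi_i-\phi_j=\eta_{ij}^p$, and then that their differentials patch. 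The patching is a characteristic-$p$ phenomenon: the $\phi_i$ are local expressions of a section of $L^{-p}$, so they differ by the transition units of $L^{-p}$, and since these are $p$-th powers of units of $L$ their differentials vanish; the product rule then gives $d\phi_i=(\text{unit})\,d\phi_j$, so that $(d\phi_i)$ descends to a global section of $\Omega_X\tensor L^{-p}=\Omega_X(-pD)$, which is nothing but $d\eta$ under the identification $H^0(X,B_X(-D))=\{df:(df)\geq pD\}$.

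Granting the local equation, the rest is immediate. On the smooth ambient $U_i\times\AA^1_u$ the hypersurface $H:=u^p-\phi_i$ has differential $dH=p\,u^{p-1}\,du-d\phi_i=-d\phi_i$ in characteristic $p$, a form with no $du$-component. Hence at a point of $G$ lying over $x$ the hypersurface $\{H=0\}$ is singular if and only if $d\phi_i(x)=0$, that is, if and only if $d\eta(x)=0$; since $\rho$ is surjective this already shows that $G$ is smooth exactly when the global section $d\eta$ of $\Omega_X(-pD)$ vanishes nowhere.

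Finally I would translate this vanishing condition into the cokernel statement. The section $d\eta$ is the same datum as the map $\OO_X(pD)\namedTo{d\eta}\Omega_X$, and a map from a line bundle into a locally free sheaf has locally free cokernel at $x$ precisely when it is nonzero on the fibre over $x$ (otherwise the cokernel jumps rank there), i.e. precisely when $d\eta(x)\neq 0$. Thus $\Coker(d\eta)$ is locally free at every point if and only if $(d\eta)=pD$, and combined with the previous paragraph this gives the equivalence ``$G$ smooth $\iff (d\eta)=pD$''. I expect the bookkeeping in the second paragraph --- matching the splitting cochain $\phi_i$ of the Frobenius pullback with the $B_X(-D)$-section and hence with $d\eta$ --- to be where all the real work lies, while the Jacobian and cokernel steps are formal.
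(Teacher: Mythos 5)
Your proof is correct, but be aware that the paper does not actually prove Theorem~\ref{smoothnessofG} at all: its proof field is just a pointer to Theorem~3 of \cite{T10} for the case $\dim X=2$, with the general statement attributed to Mukai. Your argument is essentially that standard argument, written out in full and valid in every dimension, so it supplies what the paper omits rather than diverging from it. Indeed, the core local computation already appears elsewhere in this paper, inside the proof of Proposition~\ref{newTangStr}: writing $D=\{(U_i,g_i)\}$ as a Cartier divisor, the justification satisfies $\eta\vert_{U_i}=g_i^pc_i$ with $c_i\in\OO_{U_i}$, and $G$ is described there as $\Proj\OO_{U_i}[x,y]/(c_ix^p+y^p)$; in the chart $\{x\ne 0\}$, which contains all of $G$ by Proposition~\ref{picprop}(1), this is exactly your equation $u^p-\phi_i=0$ with $u=y/x$ and $\phi_i=-c_i$. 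This also discharges the bookkeeping you flag as the delicate point: identifying the splitting cochain with the $B_X(-D)$-section is precisely the relation $\eta\vert_{U_i}=g_i^pc_i$, i.e.\ $c_i$ is the expression of $\eta$ in the trivialization of $\OO_X(-pD)$ given by $g_i^p$, whence $d\eta=g_i^p\,dc_i$ is your glued section of $\Omega_X\tensor L^{-p}$ (and the $\phi_i$ are in fact unique, since $H^0(X,\OO_X(-pD))=0$ for ample $D$). Two small things to make explicit in a final write-up: the equivalence ``$\Coker(d\eta)$ locally free at every $x$ if and only if the section $d\eta$ vanishes nowhere'' uses that $d\eta$ is not identically zero, which is exactly where the hypothesis $\eta\notin k(X)^p$ enters; and your chart computation tests smoothness of $G$ only at points lying over $x$, which suffices precisely because $\rho$ is a finite homeomorphism, as you note at the outset.
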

\begin{proof}
For a proof in the case of $\dim X=2$, see Theorem~3 \cite{T10}.
\end{proof}

\begin{Definition}[Tango structure]
Let $X$ be a smooth projective variety with a pre-Tango structure
$L = \OO_X(D)$. Then, $D$, or $L$,  is called a {\em Tango structure}
if  and only if  a justification $\eta\in k(X)\backslash k(X)^p$
satisfies  $(d\eta) = p D$. In this case, the pre-Tango polarization
$(X, L)$ or $(X,D)$ will be called a {\em Tango polarization}.
\end{Definition}

A smooth projective curve $X$ of genus $g\geq 2$ with 
a Tango structure $D$ is called a {\em Tango-Raynaud curve}.
For examples of Tango-Raynaud curves, see for example
\cite{Ray, Mu79, Mu05}.

\subsection{cyclic cover}

Let $(X, D)$ be a pre-Tango polarization and 
$D$ is divided by $k\in\NN$ 
with $(p,k)=1$ and we have $D = k D'$.
If $X$ is a curve, we can divide $D$ by any natural number $k$ 
dividing  $\deg D$  using the theory of Jacobian variety
(cf. page~62 of \cite{Ab}). But the condition $(p,k)=1$ 
is necessary for the covering to be cyclic.

Now we construct a $k$th cyclic cover of the $\PP^1$-fibration
$\pi : P\To X$ ramified over $F + G$,
which means that $\pi$  is ramified at the reduced preimage of $F + G$.
There are at least two well-known constructions.

The first one is rather explicit and is suitable 
for computing cohomologies (cf. \cite{T10}). We first 
choose $m\in\NN$ such that $k \vert (p+m)$ and set 
${\cal M} = \OO_P\left(-\frac{p+m}{k}F\right) 
               \tensor \pi^*\OO_X(pD')$.
Then we have 
${\cal M}^{\tensor{k}} = \OO_P(-mF)\tensor \OO_P(-pF)\tensor \pi^*\OO_X(pD)
        = \OO_P(-mF -G)$
by Proposition~\ref{picprop}. Then we can introduce 
    $\Dirsum_{i=0}^{k-1}{\cal M}^{\tensor{i}}$
the structure of a graded $\OO_P$-algebra by defining multiplication
     ${\cal M}^{\tensor{i}}\times {\cal M}^{\tensor{j}}
\To {\cal M}^{\tensor{i+j}}$
s. t. $(a. b) \mapsto a\tensor b$
if $i+j<k$ and 
     ${\cal M}^{\tensor{i}}\times {\cal M}^{\tensor{j}}
\To {\cal M}^{\tensor{i+j}} \To  {\cal M}^{\tensor{i+j-k}}$
s. t. $(a. b) \mapsto a\tensor b \mapsto a\tensor b \tensor \xi$
if $i+j\geq k$ where we choose a non-trivial  element $\xi \in \OO_P(mF +G)$
such that $mF + G$ is the zero locus of $\xi$.
Now we consider the affine morphism 
    $X' := {\cal Spec}\; \Dirsum_{i=0}^{k-1}{\cal M}^{\tensor{i}}\to P$
and this is the cyclic cover ramified over  $mF + G$.
Since $X$ is smooth, $F\iso X$ is also smooth.
Moreover if $D$ is a Tango structure and $G$ is smooth 
by Theorem~\ref{smoothnessofG}, then $X'$ is smooth if and only if $m=1$;
if $m>1$ then $X'$ is singular along $F$, which may cause non-normality
of $X'$. Normalization of $X'$, if necessary, is carried out by Esnault-Viehwegs 
method
(see \S~3 of \cite{EV}). 
   $\tilde{X} = {\cal Spec} \Dirsum_{i=0}^{k-1} M^{\tensor{i}}
\tensor\OO_P\left(
              \left[
                  \frac{i ( mF + G)}{k}
             \right]
          \right)$
and this is smooth if $D$ is Tango.
We note that this normalization procedure highly depends on
the condition $(p,k)=1$ since we use the $k$th root of unity.
Then we set the natural morphism 
$\varphi: \tilde{X} \To X' \To P$.

The second construction uses normalization. Since we have liner equivalence
$G \sim p F - p \pi^*(D)$
there exist a function $R \in k(P)$ such that 
$(R) =  G - (p F - p \pi^*(D))
        =  G - (p F - p k \pi^*(D'))$.
Then let $\tilde{X}$ be the normalization of $P$
in the finite extension $k(P)(R^{1/k})$ of $k(P)$ and 
$\varphi : \tilde{X} \To P$
be the normalization morphism. Then we set $f= \pi\circ \varphi$.
Now if we work locally we know that there exist divisors 
$\tilde{G}$ and $\tilde{F}$ on $\tilde{X}$ such that 
$\varphi^*F = k \tilde{F}$ and $\varphi^*G = k \tilde{G}$. 
Moreover, we have 
$\tilde{G} \sim p \tilde{F} - p f^*(D')$
on $\tilde{X}$. 
We note that the condition  $(p,k)=1$ is necessary to 
assure  the existence of $\tilde{F}$, division of $F$ by $k$.
Otherwise, if $k =p^\ell r$ with $\ell\geq 1$ and $(p,r)=1$
we have $\tilde{F}\subset \tilde{X}$ such 
that $\varphi^*F = k' \tilde{F}$ with $k' = p^{\ell-1}r = k/p$.
$\tilde{X}$ is smooth if $D$ is Tango.

Now we set  $f := \pi\circ\varphi:\tilde{X}\To X$, which is 
actually a fibration of rational curves with moving singularities,
i.e., rational curves with cusp singularity of type $x^p = y^t$
at $\tilde{G}$.

\subsection{polarization}
The cyclic cover $\tilde{X}$ of the $\PP^1$-fibration is a counter-example to 
Kodaira vanishing because the polarization 
$\tilde{D} = (k-1)\tilde{F} + f^*(D')$ causes non-vanishing 
$H^1(\tilde{X}, \OO_{\tilde{X}}(-\tilde{D}))\ne{0}$. In fact,
$\tilde{D}$ is ample (see Sublemma~1.6 \cite{Mu05})
and we have 
\begin{Proposition}
\label{newTangStr}
Suppose $\tilde{X}$ is as above, then
$\tilde{D}$ is a Tango structure of $\tilde{X}$
and in particular we have Kodaira non-vanishing 
$H^1(\tilde{X}, \OO_{\tilde{X}}(-\tilde{D}))\ne{0}$.
\end{Proposition}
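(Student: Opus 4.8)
The plan is to show directly that $\tilde D = (k-1)\tilde F + f^*(D')$ is a Tango structure on $\tilde X$, i.e.\ to produce a justification $\tilde\eta \in k(\tilde X)\backslash k(\tilde X)^p$ with $(d\tilde\eta) = p\tilde D$, since by the discussion following Definition~\ref{def:pretango} (and since $\dim\tilde X \geq 2$) this immediately yields the Kodaira non-vanishing $H^1(\tilde X, \OO_{\tilde X}(-\tilde D))\neq 0$. The natural candidate for $\tilde\eta$ comes from pulling back the data on $X$: recall $D = kD'$, and on the $\PP^1$-fibration $\pi\colon P\to X$ the section $F$ carries a fiber coordinate, so I would build $\tilde\eta$ from the $k$th root $R^{1/k}$ (from the second, normalization construction) together with the pullback $f^*\eta$ of a justification of the Tango structure $D$ on $X$. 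The guiding identity is the linear equivalence $\tilde G \sim p\tilde F - p f^*(D')$ established in the cyclic-cover construction, which is exactly the divisorial relation one expects for a degree-$p$ purely inseparable cover of $\tilde X$ to exist, hence (by Theorem~\ref{Mu05:prop1.1}) for $\tilde D$ to be a pre-Tango structure.

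First I would verify that $\tilde D$ is a pre-Tango structure by exhibiting the rank-$2$ bundle and section required by Theorem~\ref{Mu05:prop1.1}: the divisor $\tilde G$ on $\tilde X$, together with the relation $\tilde G \sim p\tilde F - p f^*(D')$, gives a reduced irreducible effective divisor whose associated cover $\tilde G \to \tilde X$ is purely inseparable of degree $p$ (this is the statement that the fibration $f$ has rational curves with cusps $x^p = y^t$ along $\tilde G$). Rewriting $p\tilde F - pf^*(D') = p\tilde F - pf^*(D') $ and comparing with $p\tilde D = p(k-1)\tilde F + pf^*(D')$, I would track the fiber divisor bookkeeping to identify the embedding $0\to\OO_{\tilde X}\to \tilde E\to \OO_{\tilde X}(\tilde D)\to 0$, so that the hypotheses of Theorem~\ref{Mu05:prop1.1}(2) are met and the equivalence gives us pre-Tango.

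Second, to upgrade from pre-Tango to Tango I would invoke Theorem~\ref{smoothnessofG}: since $\tilde X$ is smooth (established at the end of the cyclic-cover subsection under the hypothesis that $D$ is Tango) and $\tilde G$ is the smooth cover constructed from $\tilde\eta$, smoothness forces $(d\tilde\eta) = p\tilde D$ rather than the weaker inequality $(d\tilde\eta)\geq p\tilde D$. Concretely, I would compute $d\tilde\eta$ in a local coordinate adapted to the fibration, using that $\eta$ is a justification with $(d\eta)=pD$ on the base and that the $k$th root $R^{1/k}$ contributes a differential supported along $\tilde F$ and $\tilde G$; the constraint $k\mid(p+m)$ with $m=1$ for smoothness is what makes the fiber-direction contribution land exactly at the divisor $(k-1)\tilde F$.

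The main obstacle I anticipate is the local differential computation of $(d\tilde\eta)$: one must check that the order of vanishing of $d\tilde\eta$ along \emph{each} component of $\tilde F$, $\tilde G$, and the fibers of $f$ matches $p\tilde D$ exactly, and this is delicate precisely because of the moving cusp singularities $x^p = y^t$ along $\tilde G$ and because the $k$th root introduces ramification whose differential behaves subtly in characteristic $p$ when $p\mid t$. The condition $(p,k)=1$, used repeatedly to guarantee that $\tilde F$ exists as a genuine $k$th division of $F$, is what keeps the cover tame in the fiber direction and lets the computation separate cleanly into a base contribution (controlled by $(d\eta)=pD$) and a fiber contribution (controlled by the cyclic structure); the heart of the argument is verifying these two contributions assemble to exactly $p\tilde D$.
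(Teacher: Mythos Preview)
Your structural route via Theorem~\ref{Mu05:prop1.1} and Theorem~\ref{smoothnessofG} has a genuine gap: you misidentify the role of $\tilde G$. In Theorem~\ref{Mu05:prop1.1} the purely inseparable cover $G\to X$ is a divisor sitting inside a $\PP^1$-bundle $P$ over $X$, and the covering map is the restriction $\pi\vert_G$. But $\tilde G$ is a divisor \emph{inside} $\tilde X$ itself; there is no map $\tilde G\to\tilde X$ that is a finite degree-$p$ cover. The relation $\tilde G\sim p\tilde F-pf^*(D')$ is just a linear equivalence of divisors on $\tilde X$; it is not the data of a $\PP^1$-bundle $\tilde P\to\tilde X$ together with a rank-$2$ extension $0\to\OO_{\tilde X}\to\tilde E\to\OO_{\tilde X}(\tilde D)\to 0$, so Theorem~\ref{Mu05:prop1.1}(2) cannot be invoked as written. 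For the same reason step~2 collapses: Theorem~\ref{smoothnessofG} concerns smoothness of the cover constructed \emph{from} $(\tilde X,\tilde D,\tilde\eta)$, which would live in a new $\PP^1$-bundle over $\tilde X$ that you have not built, and is not $\tilde G$.

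The paper's proof is precisely the local differential computation you flag as the ``main obstacle'' and then try to avoid. The justification is simply $\tilde\eta=R^{1/k}$ (no $f^*\eta$ term). From $(\tilde\eta)=\tilde G-p\tilde F+pf^*(D')$ one writes locally $\tilde\eta=g(\delta\phi^{-1})^p$ with $g,\phi,\delta$ local equations of $\tilde G,\tilde F,f^*(D')$, so $d\tilde\eta=(\delta\phi^{-1})^p\,dg$. The substantive step is computing $dg$: the explicit local form of $G\subset P$ as $c_ix^p+y^p=0$ (coming from $\eta\vert_{U_i}=g_i^pc_i$) pulls back under $\varphi$ to $g=c_iZ^{kp}+W^{kp}$, whence $dg=(\text{unit})\cdot\phi^{pk}\,dc_i$. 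Substituting gives $d\tilde\eta=(\text{unit})\cdot(\phi^{k-1}\delta)^p\,dc_i$, so $(d\tilde\eta)\geq p\bigl((k-1)\tilde F+f^*D'\bigr)=p\tilde D$, with equality exactly when $(d\eta)=pD$. Thus the Tango hypothesis on $D$ feeds directly into the equality for $\tilde D$; the base/fiber separation you anticipated is real, but it must actually be carried out rather than inferred structurally.
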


This result is stated in \cite{Mu79} without proof and in the case of 
$k\equiv 1 \mod p$ a proof using Maruyama's elementary transformation
\cite{Maru} is given in \cite{Mu05}. We give here a proof of general case.

\begin{proof}
Let $\tilde\eta = R^{1/k}\in k(\tilde{X})$.
Since $(\tilde\eta)= \tilde{G} - p \tilde{F} + p f^*(D')$,
$\tilde\eta$ is locally described as 
$\tilde\eta = g(\delta\phi^{-1})^p$
where $g$, $\phi$ and $\delta$ are local equations 
defining $\tilde{G}$, $\tilde{F}$ and $f^*(D')$. Then 
its K\"ahler differential is 
\begin{equation}
\label{kaehlerdiff}
       d\tilde\eta = (\delta\phi^{-1})^p dg
                   = (\phi^{k-1}\delta)^p\phi^{-pk}dg.
\end{equation}
Now we consider $dg$. 
As a Cartier divisor we describe $D = \{(U_i, g_i)\}_i$ 
for an open cover $X = \Union_i U_i$ and $g_i\in k(X)$.
Since $D$ is a pre-Tango structure, there exists a justification
$\eta\in k(X)$ such that $(d\eta) \geq pD$, which locally 
means that we have $\eta\vert_{U_i} = g_i^pc_i$ for some $c_i \in \OO_{U_i}$
so that we have $(d\eta)\vert_{U_i} = g_i^p dc_i$.
Then, as in Proposition~1 \cite{T10}, $G\subset P$
is locally described as 
\begin{equation*}
     \Proj \OO_{U_i}[x,y]/(c_i x^p + y^p)
\end{equation*}
where $x$ is the (local) coordinate corresponding to 
the canonical section $F$ of $\pi: P\To X$.
Hence the local defining equation of $G \subset P$  is 
$c_i x^p + y^p$, 
and since $\varphi^{*}F = k \tilde{F}$
and $\varphi^{*}G = k \tilde{G}$, the defining equation 
of $\tilde{G}$ is 
$g = c_i Z^{kp} + W^{kp}$,
where $Z$ is the local coordinate of $\tilde{X}$ corresponding to
$\tilde{F}$, namely $Z = \epsilon\phi$ with some local unit $\epsilon$.
Thus we have 
\begin{equation}
\label{dg}
   dg = \epsilon^{pk}\phi^{pk} dc_i.
\end{equation}
Thus by (\ref{kaehlerdiff}) and (\ref{dg}) we obtain
$d\tilde\eta = (\delta\phi^{-1})^p dg
                   = \epsilon^{pk}(\phi^{k-1}\delta)^p dc_i$
so that
\begin{equation*}
(d\tilde\eta) \geq  p((k-1)\tilde{F} + f^*D')  = p\tilde{D}
\end{equation*}
where the equality holds if $(d\eta) =pD$, i.e., if $D$ is 
a Tango structure.
\end{proof}

\section{Calabi-Yau threefolds and the Raynaud-Mukai construction}

\subsection{Raynaud-Mukai varieties cannot be Calabi-Yau}

The aim of this section is to show that 
Mukai construction does not produce K3 surfaces or 
Calabi-Yau threefolds. Notice that Raynaud-Mukai variety is 
always of general type for $p\geq 5$ 
(cf. Prop.~7~\cite{Mu79} or Prop.~2.6~\cite{Mu05})
so that the only 
possibility is the case $p=2, 3$.

Now let $(X, D)$,  $D = kD'$ ($k\in\NN$), 
$\pi:P\To X$,  $F, G\subset P$, $(\tilde{X}, \tilde{D})$,
$\varphi: \tilde{X}\To P$
and $f: \tilde{X}\To X$ be as in the previous section.
The canonical divisor of $X$ will be simply denoted by $K$.
Now we have 

\begin{Proposition}[cf. Prop.~7~\cite{Mu79}]
\label{Mukai79:prop7}
Let $\tilde{K}$ be the canonical divisor of $\tilde{X}$. Then we have 
\begin{equation*}
\tilde{K} \sim 
(pk -p - k-1)\tilde{F}
+ f^*(K- (pk- p - k)D')
\end{equation*}
\end{Proposition}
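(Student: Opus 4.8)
The plan is to combine two standard formulas: the relative canonical bundle formula for the $\PP^1$-bundle $\pi : P \To X$, and the Riemann--Hurwitz (ramification) formula for the cyclic cover $\varphi : \tilde{X} \To P$. Since $(p,k)=1$, the degree-$k$ cover $\varphi$ is a tame (Kummer-type) \emph{separable} cover, so both formulas apply in their usual form; the purely inseparable behaviour enters only in the construction of $G$ and does not affect the ramification of $\varphi$.

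First I would compute $K_P$. Writing $\OO_P(F) = \OO_P(1)$, the relative canonical class has the shape $K_{P/X} \sim -2F + \pi^* c$ for a class $c$ on $X$ to be determined. Restricting to the section $F\iso X$ and using adjunction gives $\omega_{P/X}\vert_F \iso N_{F/P}^{-1}$, while the exact sequence $0\To\OO_X\To E\To L\To 0$ identifies the normal bundle as $N_{F/P}\iso L = \OO_X(D)$; this is consistent with $G\cap F=\emptyset$ (part~(1) of Proposition~\ref{picprop}), since $\OO_P(G)\vert_F \iso \OO_P(pF - p\pi^*D)\vert_F$ is then trivial. Comparing $K_{P/X}\vert_F = -2D + c$ with the class $-D$ forces $c = D$, so that
\begin{equation*}
K_P \sim \pi^*(K + D) - 2F .
\end{equation*}

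Next I would apply Riemann--Hurwitz to $\varphi$. Using the description of $\tilde{X}$ as the normalization of $P$ in $k(P)(R^{1/k})$ with $(R) = G - pF + p\pi^*D = G - pF + pk\,\pi^*D'$, the ramification is read off from the orders of $R$: along $G$ the order is $1$, along $F$ it is $-p$, and along $\pi^*D'$ it is $pk$. As $(k,1)=(k,p)=1$ while $k\mid pk$, the cover is totally ramified over $F$ and over $G$ (with $\varphi^*F = k\tilde{F}$, $\varphi^*G = k\tilde{G}$) and unramified over $\pi^*D'$. Hence
\begin{equation*}
\tilde{K} \sim \varphi^* K_P + (k-1)(\tilde{F} + \tilde{G}).
\end{equation*}
Substituting $\varphi^*K_P = f^*(K+D) - 2k\tilde{F}$ (from $f=\pi\circ\varphi$ and $\varphi^*F = k\tilde{F}$), then replacing $\tilde{G}$ by $p\tilde{F} - p f^*(D')$ (which follows from $\varphi^*G=k\tilde{G}$ together with $G\sim pF - pk\,\pi^*D'$) and using $D = kD'$, one collects coefficient $(pk-p-k-1)$ on $\tilde{F}$ and $f^*\bigl(K - (pk-p-k)D'\bigr)$ on the base, which is the asserted formula.

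The main obstacle is making these two classical formulas rigorous in positive characteristic on a possibly singular $\tilde{X}$. The tameness assured by $(p,k)=1$ is exactly what rescues Riemann--Hurwitz (no wild ramification terms), and when $D$ is Tango the variety $\tilde{X}$ is smooth so $\tilde{K}$ is honestly Cartier; in the general (normal) case one argues on the smooth locus, whose complement has codimension $\geq 2$, so the equality of Weil divisor classes persists. A secondary point to get right is the $\PP(E)$-convention, which fixes the sign of the twist $+\pi^*D$ in $K_P$; the consistency check $G\cap F=\emptyset$ pins this down.
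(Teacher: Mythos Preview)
Your proof is correct and follows essentially the same route as the paper: both combine the ramification formula $\tilde{K}\sim\varphi^*K_P+(k-1)(\tilde{F}+\tilde{G})$ with the canonical bundle formula $K_P\sim -2F+\pi^*(K+D)$ for the $\PP^1$-bundle, and then eliminate $\tilde{G}$ using the relation coming from $G\sim pF-pk\,\pi^*D'$. The only cosmetic difference is that you derive the twist in $K_P$ via adjunction on the section $F$, whereas the paper simply invokes the standard formula $K_P\sim -2F+\pi^*(K+\det E)$ with $\det E=\OO_X(kD')$; your added remarks on tameness and the normal case are not in the paper but are welcome.
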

\begin{proof}
Since the finite morphism $\varphi:\tilde{X}\To P$ is 
ramified at $\tilde{F}=(\varphi^*(F))_{red}$ and 
$\tilde{G}=(\varphi^*(G))_{red}$ with the same ramification index $k$
and $F+ G \sim (p+1)F - pk\pi^*D'$, we compute
\begin{eqnarray*}
\tilde{K}
&\sim & \varphi^*K_P + (k-1)(\tilde{F} + \tilde{G}) \quad\mbox{by ramification formula}\\
&\sim & \varphi^*K_P + (k-1)\frac{1}{k}\varphi^*(F+G)\\
&\sim & \varphi^*K_p + (k-1)((p+1)\tilde{F} - p f^*D').
\end{eqnarray*}
Moreover, since $E$ is the rank~$2$ vector bundle satisfying 
\begin{equation*}
    0 \To \OO_{X} \To E \To \OO_{X}(kD')\To 0,
\end{equation*}
we have $K_{P}\sim -2 F+ \pi^*(K + kD')$.
Then we obtain the required formula.
\end{proof}

We notice that since $\Pic P \iso \ZZ\cdot[F]\dirsum \pi^*\Pic{X}$ and 
$\varphi$ is finite, we have $\Pic{\tilde{X}} \iso \ZZ\cdot [\tilde{F}]
\dirsum f^{*}\Pic{X}$. This fact will be used implicitly in the following 
discussion.

\begin{Corollary}
\label{neverK3}
A Raynaud-Mukai surface can never be a K3 surface.
\end{Corollary}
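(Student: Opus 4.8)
The plan is to read the obstruction straight off the canonical divisor formula of Proposition~\ref{Mukai79:prop7}, using the Picard decomposition $\Pic\tilde{X}\iso\ZZ\cdot[\tilde{F}]\dirsum f^*\Pic{X}$ recorded immediately above it. A $K3$ surface has trivial canonical bundle, so if the Raynaud-Mukai surface $\tilde{X}$ were $K3$ we would need $\tilde{K}\sim 0$. Since $\tilde{K}$ lives in a direct sum, the relation $\tilde{K}\sim 0$ splits into two independent conditions: the $\tilde{F}$-coefficient must vanish, giving $pk-p-k-1=0$, and the pullback part must vanish, giving $f^*\bigl(K-(pk-p-k)D'\bigr)\sim 0$.

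First I would exploit the coefficient condition. Rewriting $pk-p-k-1=0$ as $(p-1)(k-1)=2$ and using that $p$ is prime with $(p,k)=1$, the only solutions are $(p,k)=(2,3)$ and $(p,k)=(3,2)$; in both cases $pk-p-k=1$. This already forces $p\in\{2,3\}$, consistent with the earlier remark that $p\geq 5$ yields only varieties of general type.

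Next I would use the pullback condition. As $f^*$ is injective (its image is the summand $f^*\Pic{X}$ of the decomposition), $f^*(K-D')\sim 0$ forces $K\sim D'$ on the base curve $X$, hence $\deg K=\deg D'$, i.e. $2g-2=\deg D'$, where $g\geq 2$ is the genus of $X$. On the other hand $D=kD'$ is a (pre-)Tango structure on the curve, so a justification $\eta$ satisfies $(d\eta)\geq pD$; since $(d\eta)$ is a canonical divisor on $X$ it has degree $2g-2$, and effectivity of $(d\eta)-pD$ yields $pk\deg D'=p\deg D\leq 2g-2$. Combining the two gives $pk\deg D'\leq\deg D'$, and since $D'$ is ample on a curve (so $\deg D'>0$) this forces $pk\leq 1$, which is absurd. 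Hence $\tilde{K}\not\sim 0$ and $\tilde{X}$ cannot be $K3$.

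The computations are routine; the delicate point is the logical structure. The main (minor) obstacle is to be sure that the two pieces of $\tilde{K}\sim 0$ are genuinely independent, which rests on the direct-sum splitting of $\Pic\tilde{X}$ and on the injectivity of $f^*$, and to record $\deg D'>0$ so that the final inequality $pk\leq 1$ is a true contradiction rather than a degenerate equality. One should also note that the degree bound $p\deg D\leq 2g-2$ is exactly the content of $0\leq n(C)\leq 2(g-1)/p$ recalled in Section~2, so that no extra hypothesis on $D$ is needed beyond its being a (pre-)Tango structure.
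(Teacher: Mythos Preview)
Your proof is correct and follows the same line as the paper's: split $\tilde{K}\sim 0$ via the Picard decomposition into the numerical condition $pk-p-k-1=0$ (forcing $(p,k)\in\{(2,3),(3,2)\}$) and the relation $K\sim D'$ on the curve, then derive a contradiction from the (pre-)Tango inequality $(d\eta)\geq pD$. The only cosmetic differences are your factorization $(p-1)(k-1)=2$ in place of the paper's $k=(p+1)/(p-1)$ and your explicit passage to degrees, whereas the paper writes the final step as the divisor inequality $D'=K\geq pkD'$.
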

\begin{proof}
Assuming $\dim\tilde{X}=2$, 
we have only to show that we never have $\tilde{K}\sim 0$.
Assume that we have 
$\tilde{K} \sim 
(pk -p - k-1)\tilde{F} + f^*(K - (pk - p - k)D') \sim 0$,
from which have two relations $pk -p - k-1 =0$ and $K - (pk - p - k)D'=0$.
By the first relation, we have 
$k = \frac{p+1}{p-1} \in \NN$,
so that we must have  $p=2$ and $k= 3$ or $p=3$ and $k = 2$.
This implies $K = D'$ by the second relation.
However,  since $(X, D)$ is a (pre-)Tango polarized curve, we have 
$(d\eta) \geq pD$ for some justification $\eta\in k(X)$, namely 
$D' = K \geq  p D = pk D'$, which is impossible unless $pk=1$.
\end{proof}

By a similar discussion to the proof of Corollary~\ref{neverK3}, 
we can also show 

\begin{Corollary}
\label{trivialK}
A Raynaud-Mukai threefold can never be Calabi-Yau.
\end{Corollary}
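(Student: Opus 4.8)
The plan is to follow the proof of Corollary~\ref{neverK3} verbatim for as long as possible and then insert the single extra step that the higher dimension forces. Assuming $\dim\tilde X=3$ and $\tilde K\sim 0$, I would first substitute the formula of Proposition~\ref{Mukai79:prop7} into the decomposition $\Pic\tilde{X}\iso\ZZ\cdot[\tilde F]\dirsum f^*\Pic X$ noted after that proposition. The $\tilde F$-coefficient and the $f^*$-summand must then vanish independently, giving
\begin{equation*}
  pk-p-k-1=0 \qquad\text{and}\qquad K\sim(pk-p-k)D'.
\end{equation*}
Exactly as on the surface, the first equation forces $k=(p+1)/(p-1)\in\NN$, hence $(p,k)=(2,3)$ or $(3,2)$; in both cases $pk=6$ and $pk-p-k=1$, so the second relation collapses to $K\sim D'$, that is $D=kD'\sim kK$.

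At this point the curve argument has no direct analogue, and this is where I expect the real obstacle to sit. On a curve the inequality $(d\eta)\ge pD$ is an inequality inside the single system $|K|$, so $K\sim D'$ and $pk\ge 6$ clash immediately on degrees. On the surface $X$ the sheaf $\Omega_X$ has rank $2$, and a justification yields only a sub-line-bundle $\OO_X(pD)\hookrightarrow\Omega_X$; one cannot extract $K\ge pD$ from this. Worse, in characteristic $p$ the bundle $\Omega_X$ is typically highly unstable (this instability is precisely what produces Raynaud-type examples), so neither a slope estimate nor a Bogomolov--Miyaoka--Yau inequality is available to forbid such a sub-bundle. I would therefore not argue on $X$ directly.

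Instead the plan is to use that a Raynaud-Mukai \emph{threefold} is by construction obtained from a Raynaud-Mukai \emph{surface}: thus $X=\tilde C$ for a Tango-Raynaud curve $C$, built by a cyclic cover of some degree $k_1$ with $(p,k_1)=1$ and division $D_C=k_1D_C'$. Writing $g:X\To C$ for the resulting morphism and $\tilde F_1\subset X$ for its section, we again have $\Pic X\iso\ZZ\cdot[\tilde F_1]\dirsum g^*\Pic C$. Applying Proposition~\ref{Mukai79:prop7} one level down (with $C,X$ in place of $X,\tilde X$) gives $K$ with $\tilde F_1$-coefficient $pk_1-p-k_1-1$, while the polarization formula $\tilde D=(k-1)\tilde F+f^*D'$ stated before Proposition~\ref{newTangStr} identifies the Tango structure on $X$ as $D=(k_1-1)\tilde F_1+g^*D_C'$. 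Comparing $\tilde F_1$-coefficients in the integral relation $D\sim kK$ then reduces everything to one Diophantine equation,
\begin{equation*}
  k_1-1=k\,(pk_1-p-k_1-1).
\end{equation*}

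To finish I would substitute the two surviving pairs. For $(p,k)=(2,3)$ the equation gives $k_1=4$, contradicting the coprimality $(p,k_1)=1$ required for the curve-level cyclic cover; for $(p,k)=(3,2)$ it gives $k_1=7/3\notin\NN$. Either way $\tilde K\sim 0$ is impossible, so $\tilde X$ is not Calabi-Yau. The only remaining care is bookkeeping: checking that the Picard splitting and the canonical and polarization formulas genuinely propagate through the two levels of the construction, and noting that $D$ is here a true Tango (not merely pre-Tango) structure, which is automatic because a Calabi-Yau $\tilde X$ is smooth (Theorem~\ref{smoothnessofG} and Proposition~\ref{newTangStr}).
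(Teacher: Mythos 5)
Your proposal is correct and follows essentially the same route as the paper: both first reduce $\tilde K\sim 0$ to $(p,k)\in\{(2,3),(3,2)\}$ and $K\sim D'$, then descend to the curve level, apply Proposition~\ref{Mukai79:prop7} to the curve-to-surface step together with the polarization formula $D=(k_1-1)\tilde F_1+f_1^*D_1'$, and compare $\tilde F_1$-coefficients to get the same Diophantine condition (your equation $k_1-1=k(pk_1-p-k_1-1)$ is the paper's relation multiplied by $k$), yielding $k_1=4$ against $(p,k_1)=1$ when $p=2$ and $k_1=7/3\notin\NN$ when $p=3$. Your preliminary observation that the curve-level degree argument from Corollary~\ref{neverK3} has no rank-one analogue on the surface correctly identifies why the descent step is needed, and matches what the paper implicitly does.
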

\begin{proof}
Let $\tilde{X}$ be a Mukai threefold obtained from a Mukai
surface $X$ with a (pre-)Tango structure $D = kD'$
as a $k$th cyclic cover of the $\PP^1$-fibration $P$
and assume that $\tilde{K}\sim 0$. Then as in the proof of 
Corollary~\ref{neverK3} we have 
$(p,k) = (2,3)$ or $(3,2)$ and  
\begin{equation}
\label{ksimddash}
K\sim D'.
\end{equation}
Now we will consider the situation whose dimensions are all lower by one.
Namely, let the surface $X$ be constructed from a (pre-)Tango polarized curve 
$(X_1, D_1)$ with $D_1 = k_1D_1'$. We have the $k_1$th cyclic cover 
$\varphi_1: X \To P_1$ of the $\PP^1$-fibering 
$\pi_1 : P_1\To X_1$ ramified over $F_1 + G_1$ and 
$\tilde{F}_1 = (\varphi_1^*(F_1))_{red}$ and $\tilde{G}_1 = (\varphi_1^*(G_1))_{red}$
have the same ramified index $k_1$. We set $f_1 = \pi_1\circ\varphi_1$.
Then by Proposition~\ref{Mukai79:prop7}, we have 
\begin{equation*}
K \sim (pk_1 - p - k_1 -1)\tilde{F}_1 + f_1^*(K_1 - (pk_1 - p - k_1)D_1')
\end{equation*}
Since  we have $(kD'=) D = (k_1-1)\tilde{F}_1 + f_1^*(D_1')$ by definition,
the condition (\ref{ksimddash}) entails
\begin{equation*}
\left(pk_1-p-k_1-1 -  \frac{k_1-1}{k}\right)\tilde{F}_1
+ f_1^*
\left(
K_1 - (pk_1 - p - k_1 + \frac{1}{k})D_1' 
\right)
\sim 0.
\end{equation*}
Then the coefficient of $\tilde{F}_1$ must be $0$ so that we have 
\begin{equation*}
 k_1 =  \frac{k(p+1)-1}{k(p-1)-1}   
     = \left\{
             \begin{array}{ll}
                4 & \mbox{if $p=2$}\\ 
                  & \\
               \displaystyle{\frac{7}{3}} & \mbox{if $p=3$}
             \end{array}
          \right.
\end{equation*}
But since we must have $k_1\in \NN$ and $(k_1, p)=1$, these values of $k_1$ 
are not allowed.
\end{proof}

\subsection{a modification of the Raynaud-Mukai construction}

The Raynaud-Mukai construction is an
algorithm to construct 
from a given (pre-)Tango polarization $(X, D)$ with $D = kD'$
a new  (pre-)Tango polarization
$(\tilde{X},\tilde{D})$ with $\dim X = \dim \tilde{X}-1$ 
by taking a $k$th cyclic cover. We apply this procedure inductively 
starting from a (pre-)Tango polarized curve.
We have seen in the previous subsection that the essential reason that
the Raynaud-Mukai construction does not produce Calabi-Yau threefolds is
that we cannot find the degree $k$  cyclic covers with $(p,k)=1$ 
in all inductive steps.

Now we will consider  some modification of the Raynaud-Mukai construction.
There are following two possibilities.

\begin{enumerate}
\item [(I)] Let $(X, D)$ be a (pre-)Tango polarized surface 
obtained by a method other than Mukai construction. Then apply 
the Raynaud-Mukai construction to obtain a (pre-)Tango
polarized threefold $(\tilde{X}, \tilde{D})$.

\item [(II)] Let $(X, D)$ be a (pre-)Tango  polarized  surface 
by the Raynaud-Mukai construction. Then we construct a Calabi-Yau threefold
in a similar way to Mukai construction. Namely, we do not assume 
the condition $(p,k)=1$ for the degree $k$ of ``cyclic cover ``.
\end{enumerate}

The Calabi-Yau threefolds obtained  by $(I)$ are counter-examples 
to Kodaira vanishing.  The surface $X$ required in $(I)$ is precisely 
as follows:
\begin{Corollary}
\label{requiredsurface}
Let $(X, D)$ a (pre-)Tango polarized surface with  $D = kD'$ for some $k\in \NN$.
Then  the Raynaud-Mukai construction gives a polarized Calabi-Yau  
threefold $(\tilde{X}, \tilde{D})$ by a $k$th cyclic cover if and only if 
\begin{enumerate}
\item [$(i)$] $(p,k)= (2,3)$ or $(3,2)$, and 
\item [$(ii)$] $D = k D'$  for some ample $D'$ and $K_X \sim D'$.
\end{enumerate}
In particular, $X$ is a surface of general type.
\end{Corollary}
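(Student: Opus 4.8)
The plan is to reduce the Calabi--Yau requirement to the single numerical condition $\tilde K \sim 0$ and then to read off $(i)$ and $(ii)$ from the canonical-divisor formula of Proposition~\ref{Mukai79:prop7} together with the splitting $\Pic{\tilde X}\iso \ZZ\cdot[\tilde F]\dirsum f^*\Pic X$ recorded immediately after it. For the ``only if'' direction I would assume $(\tilde X,\tilde D)$ is Calabi--Yau, so in particular $\tilde K\sim 0$, and substitute
\begin{equation*}
\tilde K \sim (pk-p-k-1)\tilde F + f^*\bigl(K-(pk-p-k)D'\bigr).
\end{equation*}
Because $f^*$ is injective on Picard groups and $[\tilde F]$ is a free generator independent of its image, the vanishing $\tilde K\sim 0$ forces the $\tilde F$-component and the $f^*$-component to vanish separately, giving the two relations $pk-p-k-1=0$ and $K-(pk-p-k)D'\sim 0$.

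The first relation is the heart of $(i)$. Rewriting it as $k=(p+1)/(p-1)$ and noting $p+1=(p-1)+2$, integrality forces $p-1\mid 2$, hence $p\in\{2,3\}$, yielding $(p,k)=(2,3)$ or $(3,2)$; this is exactly the diophantine step already carried out in the proof of Corollary~\ref{neverK3}. Feeding $pk-p-k=1$ back into the second relation collapses it to $K\sim D'$, which together with the standing hypothesis $D=kD'$ is precisely $(ii)$, where $D'$ is automatically ample because $D=kD'$ is ample and $k>0$. For the ``if'' direction I would simply run the same formula backwards: $(i)$ annihilates the coefficient $pk-p-k-1$ and reduces $pk-p-k$ to $1$, while $(ii)$ makes $f^*(K-D')$ trivial, so Proposition~\ref{Mukai79:prop7} again gives $\tilde K\sim 0$.

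The ``in particular'' clause is then immediate: by $(ii)$ the canonical divisor $K_X\sim D'$ is ample, so $X$ has maximal Kodaira dimension $\kappa(X)=2$ and is of general type.

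The step I expect to require the most care is completing the ``if'' direction from $\tilde K\sim 0$ to the full Calabi--Yau property, i.e. smoothness of $\tilde X$ together with $H^1(\tilde X,\OO_{\tilde X})=H^2(\tilde X,\OO_{\tilde X})=0$. Smoothness is controlled by Theorem~\ref{smoothnessofG} once $D$ is an honest Tango structure, so I would either restrict to the Tango case or invoke the Esnault--Viehweg normalization recalled in the cyclic-cover construction; the cohomological vanishing, by contrast, does \emph{not} follow from the canonical-class computation and would have to be extracted from the pushforward $f_*\OO_{\tilde X}$ of the degree-$k$ cover. The purely numerical equivalence is a direct consequence of Proposition~\ref{Mukai79:prop7} and the Picard splitting, so the real subtlety lies entirely in this upgrade to the Hodge-theoretic conditions.
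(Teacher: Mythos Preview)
Your approach is exactly the one the paper uses: the paper's entire proof is the one-line reference ``By the same discussion as in the proof of Corollary~\ref{neverK3} and~\ref{trivialK},'' i.e.\ apply Proposition~\ref{Mukai79:prop7}, split along $\Pic\tilde X\iso\ZZ\cdot[\tilde F]\oplus f^*\Pic X$, solve $pk-p-k-1=0$, and read off $K\sim D'$. Your write-up is simply a more explicit version of that argument, including the converse direction.

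Your final paragraph identifies a genuine soft spot, and it is worth noting that the paper does \emph{not} close it either. In the paper, Corollary~\ref{requiredsurface} is effectively only the statement $\tilde K\sim 0\iff(i)\wedge(ii)$; the cohomological vanishings $H^1(\tilde X,\OO_{\tilde X})=H^2(\tilde X,\OO_{\tilde X})=0$ are not established here. Indeed, immediately afterwards the paper introduces Proposition~\ref{h1ofCY3fold} precisely to analyze $H^1(\tilde X,\OO_{\tilde X})$ under an additional fibration hypothesis, and prefaces it with the remark that simple connectedness is ``often assumed'' for Calabi--Yau threefolds. So your instinct that the upgrade from $\tilde K\sim 0$ to the full Calabi--Yau package requires separate work is correct, and matches what the paper actually does (or rather, leaves open).
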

\begin{proof}
By the same discussion as in 
the proof of Corollary~\ref{neverK3} and ~\ref{trivialK}.
\end{proof}

Unfortunately we do not know how to construct a polarized surface $(X,
D)$ as in Corollary~\ref{requiredsurface}. 
But Theorem~\ref{NonKVsurfaces}$(i)$ below seems to indicate a possibility.

\begin{Theorem}[S.\ Mukai \cite{Mu79}]
\label{NonKVsurfaces}
Let $X$ be a (smooth) surface over the field $k$ of $\chara{k}=p>0$. Assume that 
Kodaira vanishing fails on $X$.
Then we have
\begin{enumerate}
\item [$(i)$] $X$ is of general type or quasi-elliptic surface 
with Kodaira dimension $1$ (if $p=2,3$).
\item [$(ii)$] There exists a surface
 $X'$ birationally equivalent with $X$
such that there is a morphism $g : X'\To C$ to a curve $C$
whose fibers are all connected and singular.
\end{enumerate}
\end{Theorem}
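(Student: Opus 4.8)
The plan is to recast the failure of Kodaira vanishing as the existence of an \emph{exact} twisted $1$-form, i.e.\ of an ample sub-line-bundle of $\Omega_X$, and then to read both statements off this datum. Suppose $H^1(X,\OO_X(-D))\ne 0$ for some ample $D$. Because the iterated Frobenius $F^e\colon H^1(X,\OO_X(-p^eD))\To H^1(X,\OO_X(-p^{e+1}D))$ is eventually zero, after replacing $D$ by a suitable multiple $p^eD$ (still ample, and harmless since the conclusions concern $X$ alone) we may assume $H^0(X,B_X(-D))\ne 0$. By the Proposition of Section~2.1 this yields a justification $\eta\in k(X)\setminus k(X)^p$ with $(d\eta)\geq pD$, equivalently a nonzero $d\eta\in H^0(X,\Omega_X(-pD))$, and hence an inclusion $\OO_X(pD)\hookrightarrow\Omega_X$ of the ample line bundle $\OO_X(pD)$. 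Saturating, I obtain a sub-line-bundle $A\subseteq\Omega_X$ with $A\cdot\Gamma\geq pD\cdot\Gamma>0$ for every movable curve $\Gamma$. Thus the entire problem reduces to surfaces carrying an exact $1$-form with poles along an ample divisor.

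For statement $(ii)$ I would integrate $\eta$. The rational function $\eta$ determines a rational map from $X$ to $\PP^1$; resolving indeterminacy gives a birational model $X'\To X$, and Stein factorisation of the resulting morphism produces $g\colon X'\To C$ with $C$ the normalisation of the image, so that $g$ has connected fibres. To force every fibre to be singular I would use ampleness: since $(d\eta)\geq pD$, the zero locus of $d\eta$ as a section of $\Omega_X$ contains the ample divisor $D$, so the critical locus of $\eta$ sweeps out an ample curve. As $D$ meets every fibre, each general fibre of $g$ carries a point at which $d\eta=0$, i.e.\ a point where $g$ is not smooth; hence no general fibre is smooth. This is exactly the ``wild'' behaviour forbidden in characteristic $0$ by generic smoothness.

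For statement $(i)$ I would test the constraint $A\subseteq\Omega_X$ against the Enriques--Kodaira / Bombieri--Mumford classification of the minimal model of $X$. If $\kappa(X)=-\infty$ then $X$ is ruled and the restriction of $\Omega_X$ to a general ruling has only summands of degree $\leq 0$, so $\Omega_X$ admits no positive sub-line-bundle and $H^0(X,\Omega_X(-pD))=0$; this case is excluded. If $\kappa(X)=0$ (abelian, K3, Enriques, bielliptic) the same vanishing holds, and for the classes lifting to characteristic $0$ one may invoke \cite{DI} directly; these are excluded. There remain $\kappa(X)=1,2$. When $\kappa(X)=1$, let $\phi\colon X\To C$ be the Iitaka fibration with general fibre $F_0$. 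Then $\deg(A|_{F_0})\geq pD\cdot F_0>0$, while restricting $0\To\OO_{F_0}\To\Omega_X|_{F_0}\To\Omega_{F_0}\To 0$ shows that if $F_0$ were a smooth elliptic curve then $\Omega_X|_{F_0}$ would be an extension of the degree-zero bundle $\OO_{F_0}$ by $\OO_{F_0}$, admitting no sub-line-bundle of positive degree — contradicting the injection $A|_{F_0}\hookrightarrow\Omega_X|_{F_0}$. Hence $F_0$ is singular, $X$ is quasi-elliptic, and quasi-elliptic fibrations occur only in characteristic $2,3$; this gives exactly $(i)$.

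The hard part is the classification case-work underlying $(i)$, and it is genuinely delicate precisely in small characteristic. One must beware that the naive Bogomolov principle ``$\Omega_X$ contains a positive line bundle $\Rightarrow X$ of general type'' \emph{fails} in characteristic $p$ — its failure is what allows the quasi-elliptic $\kappa=1$ surfaces and Raynaud-type examples to exist at all — so the positivity of $A$ must be exploited fibrewise rather than globally, and one must control the $p$-closed foliation defined by $d\eta$ using the exactness of the form. Excluding the ruled case by a direct vanishing and the liftable $\kappa=0$ classes is comparatively routine; the real work is in the non-classical $\kappa=0$ surfaces peculiar to characteristic $2,3$, and in upgrading the singular-\emph{general}-fibre conclusion of $(ii)$ to \emph{all} fibres by choosing the birational model $X'$ carefully.
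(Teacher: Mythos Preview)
The paper does not prove this theorem; it is quoted from Mukai's 1979 Kinosaki notes \cite{Mu79} and used as a black box (the sentence immediately following the statement already applies it). There is therefore no proof in the present paper against which to compare your attempt.

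Your outline is nonetheless the standard route and is broadly sound: the reduction to a pre-Tango structure via iterated Frobenius is precisely the mechanism set up in Section~2.1, and extracting an ample sub-line-bundle $A\subset\Omega_X$ is the correct bridge to the Bombieri--Mumford classification. Two points deserve more care than you give them. In part~(ii), your singularity criterion compares the ample divisor $D$ on $X$ with fibres of $g$ on the blow-up $X'$; the strict transform of $D$ on $X'$ need not be ample and may miss fibres lying over the exceptional locus, so the passage from ``general fibre singular'' to ``all fibres singular'' is a genuine gap which you correctly flag but do not close. In part~(i), invoking \cite{DI} requires a lift to $W_2(k)$ rather than merely to characteristic~$0$, and for the non-classical $\kappa=0$ surfaces peculiar to $p=2,3$ one still needs a direct argument that $\Omega_X$ admits no sub-line-bundle of positive degree; your sketch leaves this open. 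Your handling of the ruled case and of $\kappa=1$ via restriction to a general fibre is correct.
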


It is proved that, in the case of surfaces, Kodaira (non-)vanishing
is preserved in birational equivalence
(see Corollary~8 \cite{Tango72k}). Thus by Theorem~\ref{NonKVsurfaces}$(ii)$
it seems to be reasonable to consider a fibration $\rho : X \To C$ to a curve.

For a Calabi-Yau threefold, we often assume simple connectedness
which implies $H^1(\tilde{X}, \OO_{\tilde{X}})=0$ for our example.
For this property, we have the following. 
\begin{Proposition}
\label{h1ofCY3fold}
Assume that the surface $X$ in Corollary~\ref{requiredsurface}
has a fibration over a curve $C$:  $g: X\To C$ and 
set $h : \tilde{X}\overset{f}{\To}X\overset{g}{\To}C$.
Then we have 
$H^1(\tilde{X}, \OO_{\tilde{X}})
\iso H^1(C, g_*\OO_X)\dirsum 
H^0(C, R^1 h_*\OO_{\tilde{X}})$.
\end{Proposition}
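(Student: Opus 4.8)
The plan is to read off the decomposition from the Leray spectral sequence of the composite $h = g\circ f\colon \tilde{X}\To C$, exploiting that the base $C$ is a curve. First I would record that $h$ is proper, being the composite of the projective morphisms $f$ and $g$, so that the Leray spectral sequence
\begin{equation*}
E_2^{p,q} = H^p(C, R^q h_*\OO_{\tilde{X}}) \Longrightarrow H^{p+q}(\tilde{X}, \OO_{\tilde{X}})
\end{equation*}
is available. Since $\dim C = 1$ we have $H^p(C, \mathcal{F}) = 0$ for every coherent sheaf $\mathcal{F}$ and every $p\geq 2$; in particular $E_2^{2,0}=0$, so the five-term exact sequence collapses to the short exact sequence
\begin{equation*}
0 \To H^1(C, h_*\OO_{\tilde{X}}) \To H^1(\tilde{X}, \OO_{\tilde{X}}) \To H^0(C, R^1 h_*\OO_{\tilde{X}}) \To 0.
\end{equation*}

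The one genuine computation is to identify $h_*\OO_{\tilde{X}}$ with $g_*\OO_X$. Since $h_*\OO_{\tilde{X}} = g_*(f_*\OO_{\tilde{X}})$, it suffices to prove $f_*\OO_{\tilde{X}} = \OO_X$. Here I would use that $f = \pi\circ\varphi$ realises $\tilde{X}$ as a fibration of rational curves over $X$: the variety $\tilde{X}$ is integral, being the normalization of $P$ in a field extension of $k(P)$; the base $X$ is normal, indeed smooth; and the generic fibre of $f$ is a geometrically integral rational curve, namely the cuspidal curve $x^p=y^t$ of the construction. Consequently $k(X)$ is algebraically closed in $k(\tilde{X})$ and $H^0$ of the generic fibre is $k(X)$ itself. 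Equivalently, the Stein factorization of $f$ has trivial finite part, so $f_*\OO_{\tilde{X}}$ is a finite, torsion-free $\OO_X$-algebra of generic rank one, and normality of $X$ forces $f_*\OO_{\tilde{X}} = \OO_X$. Substituting, the sequence becomes $0 \To H^1(C, g_*\OO_X) \To H^1(\tilde{X}, \OO_{\tilde{X}}) \To H^0(C, R^1 h_*\OO_{\tilde{X}}) \To 0$.

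Finally, all three terms are finite-dimensional $k$-vector spaces, being cohomology groups of coherent sheaves on projective $k$-schemes, so this short exact sequence of $k$-vector spaces splits automatically; this yields the asserted isomorphism $H^1(\tilde{X}, \OO_{\tilde{X}}) \iso H^1(C, g_*\OO_X)\dirsum H^0(C, R^1 h_*\OO_{\tilde{X}})$. I expect the main obstacle to be the middle step, the identification $f_*\OO_{\tilde{X}}=\OO_X$: one must argue cleanly that the singular, moving-singularity fibres of $f$ do not obstruct connectedness, which is precisely where normality of $\tilde{X}$ (guaranteed when $D$ is Tango, by the construction in \S 2) and geometric integrality of the cuspidal generic fibre are essential. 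By contrast the vanishing $H^{\geq 2}(C,-)=0$ and the splitting over the field $k$ are routine.
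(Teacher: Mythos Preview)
Your argument is correct and shares the same spectral-sequence skeleton as the paper: Leray for $h$, the five-term sequence, and the vanishing $H^2(C,-)=0$ on a curve to obtain the short exact sequence, which then splits over $k$.

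The genuine point of divergence is the identification $h_*\OO_{\tilde{X}}=g_*\OO_X$, i.e.\ $f_*\OO_{\tilde{X}}=\OO_X$. You obtain this conceptually, via Stein factorization: $\tilde{X}$ is integral, $X$ is smooth, and the generic fibre of $f$ is a geometrically integral (cuspidal) rational curve, so the finite part of the Stein factorization is trivial. The paper instead computes it explicitly from the cyclic-cover description: in the two relevant cases $(p,k)=(2,3)$ and $(3,2)$ one has $\varphi_*\OO_{\tilde{X}}=\bigoplus_{i}\OO_P(-a_i)\otimes\pi^*\OO_X(b_iD')$ with $a_0=0$ and $a_i>0$ for $i>0$, and then $\pi_*\OO_P=\OO_X$ while $\pi_*\OO_P(-a_i)=0$ for $a_i>0$, giving $f_*\OO_{\tilde{X}}=\OO_X$ directly. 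Your route is cleaner and case-free, but it leans on the assertion that the generic fibre is geometrically integral; the paper's explicit splitting avoids that verification entirely and makes the vanishing of the extra summands visible line by line. Either way the conclusion is the same.
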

\begin{proof}
Consider the Leray spectral sequence
\begin{equation*}
    E_2^{pq} = H^p(C, R^qh_*\OO_{\tilde{X}}) 
\Rightarrow H^{p+q}(\tilde{X}, \OO_{\tilde{X}}).
\end{equation*}
Then by the 5-term exact sequence we have 
\begin{equation*}
0\To  H^1(C, h_*\OO_{\tilde{X}}) \To H^1(\tilde{X}, \OO_{\tilde{X}}).
 \To H^0(C, R^1h_*\OO_{\tilde{X}})
 \To H^2(C, h_*\OO_{\tilde{X}})
\end{equation*}
where the last term $H^2(C, h_*\OO_{\tilde{X}})$ vanishes since $\dim C < 2$.
Thus we have 
\begin{equation*}
H^1(\tilde{X}, \OO_{\tilde{X}})
\iso H^1(C, h_*\OO_{\tilde{X}})\dirsum 
H^0(C, R^1 h_*\OO_{\tilde{X}}).
\end{equation*}
On the other hand,  we have $(p,k)=(2,3)$ or $(3,2)$ by
Corollary~\ref{requiredsurface} and 
the explicit construction of the cyclic 
cover gives
\begin{eqnarray*}
\tilde{X}
&=& 
\left\{
\begin{array}{ll}
{\cal Spec}\Dirsum_{i=0}^2\OO_P(-i)\tensor\pi^*(2iD')
 & \mbox{if $(p,k)= (2,3)$}
\\
{\cal Spec}\Dirsum_{i=0}^1\OO_P(-2i)\tensor\pi^*(3iD')
 & \mbox{if $(p,k)= (3,2)$}
\end{array}
\right.
\end{eqnarray*}
where $\pi: P \To X$ is the $\PP^1$-fibering.
Thus we compute
\begin{eqnarray*}
h_*\OO_{\tilde{X}} 
&=& (g\circ \pi\circ \varphi)_*\OO_{\tilde{X}}
 =  (g\circ \pi)_* (\varphi_*\OO_{\tilde{X}})\\
&=& \left\{
\begin{array}{ll}
(g\circ\pi)_*\left(\Dirsum_{i=0}^2\OO_P(-i)\tensor\pi^*(2iD')\right)
 & \mbox{if $(p,k)= (2,3)$}
\\
(g\circ\pi)_*\left(\Dirsum_{i=0}^1\OO_P(-2i)\tensor\pi^*(3iD')\right)
 & \mbox{if $(p,k)= (3,2)$}
\end{array}
\right.\\
&=& \left\{
\begin{array}{ll}
g_*(\pi_*\OO_P) \dirsum  \; 
g_* (\pi_*\OO_P(-1)\tensor\OO_X(2D')) & \\
\hspace*{20pt} \dirsum \;
g_* (\pi_*\OO_P(-2)\tensor\OO_X(4D'))
 & \mbox{if $(p,k)= (2,3)$}
\\
g_* (\pi_*\OO_P)
\dirsum
g_* (\pi_*\OO_P(-2)\tensor\OO_X(3D'))
 & \mbox{if $(p,k)= (3,2)$}
\end{array}
\right.
\\
\end{eqnarray*}
Now since $\pi_*\OO_P = \OO_X$ and $\pi_*\OO_P(-i)=0$ for $i>0$
we obtain $h_*\OO_{\tilde{X}}= g_*\OO_X$.
\end{proof}

\begin{Remark}{\em 
Using another spectral sequence and 5-term exact sequence 
we can show the inclusion $H^0(C, R^1g_*(f_*\OO_{\tilde{X}}))
\subset H^0(C, R^1h_*\OO_{\tilde{X}})$ but the equality does not 
hold in general.
}\end{Remark}

Next we consider the construction $(II)$, whose algorithm is as follows:
Given a (pre-)Tango curve, we make a (pre-)Tango polarized surface $(X, D)$
and a $\PP^1$-bundle $\pi: P\To X$ with the canonical section $F\subset P$ 
together with a purely inseparable
cover $\pi\vert_G : G\To X$  of degree $p$ corresponding to $D$.
Then choose $k=p^\ell r$ with $(p,r)=1$ and $\ell\geq 1$ and 
let $\varphi : \tilde{X}\To P$ be the normalization of $P$ in 
$k(P)(R^{1/k})$ where $R\in K(P)$ is such that 
$(R) = G - (pF -p\pi^*(D))$.

\begin{Lemma}
\label{algorithmII}
Let $(X, D)$, $D =kD'$ with $(2\leq)\; k\in\NN$, 
be a (pre-)Tango polarized surface by
the Raynaud-Mukai construction.
Then the construction $(II)$ gives a Calabi-Yau
threefold if and only if 
 $(p,k, K)=(2,4, 2D')$ or $(3,3, D)$.
\end{Lemma}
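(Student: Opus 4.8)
The plan is to reduce the Calabi--Yau condition to the triviality of the canonical class $\tilde K$ of $\tilde X$ and to compute $\tilde K$ by a ramification analysis of $\varphi:\tilde X\To P$, exactly as in Proposition~\ref{Mukai79:prop7} but now keeping track of the fact that $(p,k)\ne 1$. Smoothness of $\tilde X$ is not an issue: since $(X,D)$ is produced by the Raynaud--Mukai construction, $D$ is a Tango structure by Proposition~\ref{newTangStr}, and it was recorded above that $\tilde X$ is then smooth. Thus, modulo the cohomological conditions $H^1=H^2=0$ (which are governed by the fibration computation of Proposition~\ref{h1ofCY3fold} and its analogues), "Calabi--Yau" here amounts to $\tilde K\sim 0$, and the whole lemma becomes a numerical problem about when $\tilde K$ can vanish.

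First I would pin down the ramification of $\varphi$. Writing $(R)=G-pF+pk\pi^{*}D'$ and recalling $k=p^{\ell}r$ with $\ell\geq 1$, the order of $R$ along $F$ is $-p$ and along $G$ is $+1$, so the induced ramification indices are $k/p$ over $F$ and $k$ over $G$, i.e. $\varphi^{*}F=(k/p)\tilde F$ and $\varphi^{*}G=k\tilde G$, while $\pi^{*}D'$ is unramified. Setting $\tilde\eta=R^{1/k}$ and computing its divisor exactly as in the proof of Proposition~\ref{newTangStr} gives the principal divisor $(\tilde\eta)=\tilde G-\tilde F+p f^{*}D'$, hence the linear equivalence $\tilde G\sim\tilde F-p f^{*}D'$. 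Combining the ramification formula with $K_{P}\sim -2F+\pi^{*}(K+kD')$ I then expect
\[
\tilde K\;\sim\;\Big(\tfrac{k(p-1)}{p}-2\Big)\,\tilde F\;+\;f^{*}\!\Big(K-\big((k-1)p-k\big)D'\Big).
\]

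To finish, note that the coefficient of $\tilde F$ is detected by intersecting $\tilde K$ with a general fibre $\ell$ of $f$, which satisfies $f^{*}(\,\cdot\,)\cdot\ell=0$ and $\tilde F\cdot\ell>0$, while the remaining part is a pull-back and, because $f$ is a fibration with $f_{*}\OO_{\tilde X}=\OO_{X}$, is trivial if and only if its class on $X$ is. Hence $\tilde K\sim 0$ forces $\tfrac{k(p-1)}{p}=2$ and $K\sim\big((k-1)p-k\big)D'$. The first equation reads $(p-1)\mid 2$, so $p\in\{2,3\}$ and correspondingly $k=4$ or $k=3$ (both divisible by $p$, as required by construction~(II)); substituting into the second gives $K\sim 2D'$ when $(p,k)=(2,4)$ and $K\sim 3D'=D$ when $(p,k)=(3,3)$. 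Reversing the computation shows these two cases do produce $\tilde K\sim 0$, which together with smoothness proves the converse.

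The delicate point --- and the step I expect to be the real obstacle --- is the justification of the ramification formula for $\tilde K$, because $p\mid k$ makes the cover wildly ramified along $G$ (and along $F$ as well when $\ell\geq 2$, e.g. for $(p,k)=(2,4)$). The naive different $(e-1)\tilde\Gamma$ is not valid for wild ramification in general, so the claim that the contributions are exactly $(k/p-1)\tilde F+(k-1)\tilde G$ must be argued, not assumed. I would establish it from the explicit Esnault--Viehweg normalization of the Kummer cover $y^{k}=R$, using crucially that $\tilde X$ is smooth: the fibres acquire only cusps of type $x^{p}=y^{t}$ along $\tilde G$, and a local computation there (together with the analogous one along $F$) shows that the different is precisely the "tame-looking" divisor above. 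This local analysis at $G$ is where the characteristic-$p$ subtleties are concentrated.
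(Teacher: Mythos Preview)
Your approach is essentially identical to the paper's: compute $\tilde K$ from $\varphi^{*}K_{P}$ plus the ramification contributions $(k-1)\tilde G+(k/p-1)\tilde F$, substitute $\tilde G\sim\tilde F-p f^{*}D'$ and $K_{P}\sim-2F+\pi^{*}(K+kD')$, and solve the two resulting equations for $(p,k)$ and $K$. Your caution about wild ramification is well-placed --- the paper simply writes down the tame-looking different $(k-1)\tilde G+(k/p-1)\tilde F$ without further comment, so in flagging and proposing to verify that step you are being more careful than the original.
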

\begin{proof}
Let $(X, D)$ be a
(pre-)Tango polarized surface by the Raynaud-Mukai construction.
Then we obtain  a $\PP^1$-bundle
$\pi : P\To X$ together with the canonical section $F$ 
and the purely inseparable cover $G \to X$ of degree $p$
(see Theorem~\ref{Mu05:prop1.1}).

In Mukai construction, we take a $k$th cyclic cover 
of $P$ where $(k,p)=1$. This does not work as we have seen 
in Corollary~\ref{trivialK}. Thus we assume  $(k,p)\ne{1}$ and set 
$k = p^\ell r$ with $(p,r)=1$, $\ell\geq 1$.
Since we have  $D = kD'$ and $G \sim p F - p \pi^*(D)$,
there exists $R\in k(P)$ such that 
$(R) = G - p F + p \pi^*(kD')$.
Now let $\varphi: \tilde{X}\To P$  be the normalization of 
$P$ in $k(P)(R^{1/k})$.  Then if we set $\tilde{F} =(\varphi^*(F))_{red}$
 and $\tilde{G}=(\varphi^*(G))_{red}$, we have 
$\varphi^*(G) = k \tilde{G}$ and $\varphi^*(F) = (k/p)\tilde{F}$
and $\tilde{G} \sim \tilde{F} - p f^*(D')$
where $f = \pi\circ\varphi$.
Notice that we do not have the coefficient $p$ for $\tilde{F}$
as in the case of $(p, k)=1$.
Now as in proof of Proposition~\ref{Mukai79:prop7}, we compute
\begin{eqnarray*}
\tilde{K}
&\sim & \varphi^*K_P + (k-1)\tilde{G} + (\frac{k}{p}- 1)\tilde{F} \\
&\sim & \varphi^*K_P + 
         \left(k + \frac{k}{p} -2\right) \tilde{F}
        - p(k-1)  f^*D'\\
&\sim& (p^{\ell}r - p^{\ell-1}r -2)\tilde{F}
   + f^*(K + (p^{\ell}r - p (p^{\ell}r -1))D').
\end{eqnarray*}
Then if $\tilde{X}$ is a Calabi-Yau threefold, i.e.,
$\tilde{K}\sim 0$, we must have 
$p^{\ell}r - p^{\ell-1}r -2=0$ 
and $K + (p^{\ell}r - p (p^{\ell}r -1))D'\sim 0$,
from which we have 
$(\ell, r, p,k)= (1, 1, 3, 3)$ or $(2, 1, 2,4)$ and 
\begin{equation*}
K\sim 
\left\{
\begin{array}{ll}
    2 D'   & \mbox{if }(p,k) = (2,4)\\
    3 D'(=D)   & \mbox{if }(p,k) = (3,3)
\end{array}
\right.
\end{equation*}
\end{proof}

Now we can show 
\begin{Proposition}
\label{cy3impossible}
Calabi-Yau threefolds cannot be obtained by the construction (II).
\end{Proposition}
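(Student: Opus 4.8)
The plan is to mimic the descent argument used in the proofs of Corollary~\ref{neverK3} and Corollary~\ref{trivialK}: since the surface $X$ underlying construction (II) is itself produced by the Raynaud-Mukai construction, I would go down one more dimension to the Tango-Raynaud curve from which $X$ arises and show that the numerical constraints isolated in Lemma~\ref{algorithmII} cannot be met there.

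By Lemma~\ref{algorithmII}, if (II) produced a Calabi-Yau threefold we would be in exactly one of the two cases $(p,k,K)=(2,4,2D')$ or $(3,3,D)$, so that the canonical class $K=K_X$ is pinned down by $K\sim (c/k)D$ with $(c,k)=(2,4)$ or $(3,3)$. Write $X$ as the $k_1$th cyclic cover of the $\PP^1$-fibration $\pi_1:P_1\To X_1$ over a Tango-Raynaud curve $(X_1,D_1)$, $D_1=k_1D_1'$ with $(p,k_1)=1$, exactly as in the proof of Corollary~\ref{trivialK}; set $\tilde F_1=(\varphi_1^*(F_1))_{red}$ and $f_1=\pi_1\circ\varphi_1$. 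Proposition~\ref{Mukai79:prop7} then gives
\begin{equation*}
K\sim (pk_1-p-k_1-1)\tilde F_1 + f_1^*\bigl(K_1-(pk_1-p-k_1)D_1'\bigr),
\end{equation*}
while by construction $D=kD'=(k_1-1)\tilde F_1+f_1^*(D_1')$.

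Using the splitting $\Pic X\iso \ZZ\cdot[\tilde F_1]\dirsum f_1^*\Pic X_1$, I would compare the two expressions for $K$ componentwise. Matching the coefficient of $\tilde F_1$ in $K\sim (c/k)D$ yields a single linear equation for $k_1$. In the case $(3,3)$ this forces $k_1=3$, which is impossible because the Raynaud-Mukai construction of $X$ requires $(p,k_1)=1$ while here $p=3$. In the case $(2,4)$ it instead gives the admissible value $k_1=5$, so this case survives the $\tilde F_1$-comparison and must be excluded through the $f_1^*$-component.

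For that I would bring in the defining property of the curve: since $D_1$ is a (pre-)Tango structure with justification $\eta_1$, the canonical divisor satisfies $K_1\sim(d\eta_1)\geq pD_1=pk_1D_1'$, hence $\deg K_1\geq pk_1\deg D_1'$. Comparing the $f_1^*$-parts in the case $(2,4)$, $k_1=5$, forces a relation of the form $2K_1\sim 7D_1'$, whose degree version reads $2\deg K_1=7\deg D_1'$; together with $\deg K_1\geq 10\deg D_1'$ this gives $7\deg D_1'\geq 20\deg D_1'$, impossible since $D_1'$ is ample. (In the Tango case the same comparison reads $13D_1'\sim 0$, again contradicting ampleness.) This disposes of both cases. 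The only real subtlety is the bookkeeping of the two independent numerical equations coming from the Picard decomposition and keeping track of the coprimality constraint $(p,k_1)=1$, which immediately kills the $(3,3)$ case; the $(2,4)$ case is the genuine obstacle and is closed only by invoking the Tango bound $K_1\geq pD_1$ on the curve.
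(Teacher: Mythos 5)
Your proposal is correct and follows essentially the same route as the paper: descend to the Tango-Raynaud curve $(X_1,D_1)$, express $K$ via Proposition~\ref{Mukai79:prop7}, compare with $D=(k_1-1)\tilde F_1+f_1^*D_1'$ componentwise in $\Pic X\iso\ZZ\cdot[\tilde F_1]\dirsum f_1^*\Pic X_1$, and derive a contradiction from the (pre-)Tango bound $K_1\geq pD_1$ in the surviving case. The one small deviation is that you dispose of the $(p,k)=(3,3)$ case by observing that $k_1=3$ violates the coprimality $(p,k_1)=1$ built into the Raynaud-Mukai construction of $X$, whereas the paper again invokes the degree bound ($4D_1'=K_{X_1}\geq 9D_1'$) -- both are valid -- and your degree bookkeeping in the $(2,4)$ case ($\deg K_1\geq pk_1\deg D_1'=10\deg D_1'$) is in fact more careful than the paper's, which writes $pD_1=2\cdot 4D_1'=8D_1'$ instead of $10D_1'$; the contradiction holds either way.
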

\begin{proof}
We assume that the (pre-)Tango polarized surface $(X, D)$ 
is a fibration $f_1 : X\To X_1$ over a Tango polarized curve 
$(X_1, D_1)$ with $D_1 = k_1D_1'$, which is a $k_1$th cyclic
cover $\varphi_1: X\To P_1$ of a $\PP^1$-fibration $\pi_1:P_1 \To X_1$ 
ramified over $F_1 + G_1\subset P_1$ and we set $\tilde{F}_1 = (\varphi_1^*(F_1))_{red}$.
In this situation, we have 
\begin{equation*}
K \sim (pk_1 -p - k_1-1)\tilde{F}_1+ f_1^*(K_{X_1}- (pk_1- p - k_1)D_1')
\end{equation*}
by Proposition~\ref{Mukai79:prop7}.
We have $D = (k_1-1)\tilde{F}_1 + f_1^*D_1'$ by definition.
Now we first consider the case $(p,k) = (2,4)$. By 
Lemma~\ref{algorithmII} we have 
\begin{equation*}
2D' = \frac{1}{2}D = \frac{1}{2}(k_1-1)\tilde{F}_1 + \frac{1}{2}f_1^*D_1' 
\sim K= (k_1-3)\tilde{F}_1+ f_1^*(K_{X_1}- (k_1-2) D_1')
\end{equation*}
or otherwise
\begin{equation*}
\frac{5-k_1}{2}\tilde{F}_1 + f_1^*(\frac{2k_1-3}{2}D_1' - K_{X_1}) \sim 0,
\end{equation*}
which entails $k_1=5$ and $K_{X_1}= \frac{7}{2}D_1'$.
But since $D_1$ is a (pre-)Tango structure we must have 
$\frac{7}{2}D_1' = K_{X_1}\geq pD_1 = 2\cdot 4 D_1' = 8D_1'$, a contradiction.

The case of $(p,k)=(3,3)$ is similar. 
Since we must have $D \sim K$, we have 
$k_1 = 3$ and $K_{X_1} =4D'$.
But, since $(X_1, D_1)$ is a Tango-Raynaud curve, we must have 
$4D_1' = K_{X_1} \geq p D_1 = 3k_1 D_1' = 9D_1'$, a contradiction.
\end{proof}

\section{Cohomology of Calabi-Yau threefold with Tango-structure
\label{section:quasiKV}}

In this section, we  compute the cohomology
$H^1(X, H^{-1})$ for arbitrary ample $H$ under the assumption
that $X$ is a Calabi-Yau threefold on which Kodaira vanishing 
fails.

\begin{Theorem}[N.~Shepherd-Barron \cite{SB}]
\label{SBvanishing}
Let $X$ be a normal locally complete intersection 
Fano threefold over the field $k$ of $\chara{k}=p\geq 5$
and $L$ be an ample line bundle on $X$.
Then we have $H^1(X, L^{-1})=0$.
\end{Theorem}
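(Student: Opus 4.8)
The plan is to combine the Frobenius tower with a positivity property of the cotangent sheaf, staying inside the pre-Tango formalism of Section~2. Write $L=\OO_X(D)$. Since $X$ is locally complete intersection its dualizing sheaf $\omega_X$ is invertible and Serre duality applies, so $H^1(X,L^{-p^e})\iso H^2(X,\omega_X\tensor L^{p^e})^\vee$, and the right-hand side vanishes for $e\gg 0$ by Serre vanishing because $\omega_X\tensor L^{p^e}$ is ample. Hence the iterated absolute Frobenius
\[
H^1(X,L^{-1})\overset{F}{\To}H^1(X,L^{-p})\overset{F}{\To}\cdots\overset{F}{\To}H^1(X,L^{-p^e})=0
\]
is eventually the zero map. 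First I would therefore prove that every individual arrow is injective; a composite of injections that equals zero forces $H^1(X,L^{-1})=0$.

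The kernel of $F\colon H^1(X,\OO_X(-p^iD))\To H^1(X,\OO_X(-p^{i+1}D))$ is $H^0(X,B_X(-p^iD))$, which by the computation of $H^0(X,B_X(-D))$ in Section~2.1 consists of the exact forms $df\in k(X)$ with $(df)\geq p^{i+1}D$. Any such $df$ is in particular a global section of $\Omega_X(-p^{i+1}D)$, so $H^0(X,B_X(-p^iD))\hookrightarrow H^0(X,\Omega_X(-p^{i+1}D))$. Since $p^{i+1}D$ is ample for every $i\geq 0$, it suffices to prove the single statement that $\Omega_X$ carries no ample sub-line-bundle $M$: a nonzero section of $\Omega_X(-M)$ is exactly an inclusion $M\hookrightarrow\Omega_X$. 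Equivalently, a Fano threefold admits no pre-Tango structure.

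The heart of the proof, and the place where $-K_X$ ample and $p\geq 5$ enter, is to exclude such an $M$. A section of $\Omega_X(-M)$ coming from some $df$ is an exact, hence $p$-closed, $1$-form, so it defines a rank-one $p$-closed foliation whose conormal sheaf contains the ample $M$. I would analyze this foliation through the Cartier operator and derive a contradiction with the positivity of $\det T_X=\OO_X(-K_X)$, either by a Bogomolov-type inequality bounding the conormal bundle of a foliation against $-K_X$, or by producing many free rational curves on the Fano $X$ along which a form with $(df)\geq p^{i+1}D$ cannot survive. The hypothesis $p\geq 5$ is what makes the numerology close: it forces $p$ to exceed the index of the Fano and places $\dim X=3<p$, controlling the foliation singularities and, in the liftable cases, enabling a Deligne--Illusie degeneration. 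The normality and local complete intersection hypotheses let me run the differential argument on the smooth locus, whose complement has codimension $\geq 2$, and extend across it since $\omega_X$ is invertible.

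I expect this last step to be the main obstacle. In characteristic $p$ Hodge symmetry fails, so $H^0(X,\Omega_X)$, and with it $H^0(X,\Omega_X(-M))$, can be nonzero even though $H^1(X,\OO_X)=0$ on a Fano; ruling out ample sub-line-bundles of $\Omega_X$ is therefore a genuinely positive-characteristic positivity problem, not a formal consequence of the Fano condition. The tower reduction and the reduction to the cotangent statement are routine, while all the real work, including the precise reason the argument needs $p\geq 5$ rather than $p=2,3$, sits in the foliation and Bogomolov estimate, with the singular lci case contributing only technical bookkeeping.
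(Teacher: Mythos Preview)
The paper does not prove this theorem at all: it is quoted, with attribution, from Shepherd-Barron~\cite{SB} and then used as a black box in the proof of Theorem~\ref{main}. So there is no ``paper's own proof'' to compare your proposal against.

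As an independent sketch, your reduction is sound and standard: the Frobenius tower terminates by Serre vanishing, and the kernel at each step injects into $H^0(X,\Omega_X(-p^{i+1}D))$, so the problem becomes showing that $\Omega_X$ admits no ample invertible subsheaf---equivalently, that a Fano threefold carries no pre-Tango structure. This is indeed the shape of Shepherd-Barron's argument. However, your Step~4 is not a proof but a list of possible tools (foliations, Bogomolov-type inequalities, free rational curves, Deligne--Illusie), and you yourself flag it as the main obstacle. In Shepherd-Barron's paper the actual work is a delicate analysis of the quotient by the $p$-closed foliation and a bend-and-break argument producing rational curves of bounded $(-K_X)$-degree; the bound $p\geq 5$ enters through the numerics of that degree bound, not through ``$p$ exceeds the index'' or liftability to $W_2$. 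Your remark about Deligne--Illusie is a red herring here: if $X$ lifted to $W_2(k)$ the vanishing would be immediate for all $p\geq\dim X$, but nothing in the hypotheses guarantees a lift, and Shepherd-Barron's proof does not use one.

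There is also a small technical slip. A nonzero section of $\Omega_X\otimes M^{-1}$ gives a nonzero sheaf map $M\to\Omega_X$, not an inclusion; one must pass to the saturation of the image, which is only reflexive rank~$1$ on a normal threefold and need not be invertible off the smooth locus. On a smooth $X$ this is harmless, but in the normal l.c.i.\ case you need to say why the foliation argument survives on the singular locus, and ``$\omega_X$ invertible'' alone does not do that bookkeeping.
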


Recall that, for a polarized smooth variety $(X, L)$, 
Kodaira non-vanishing $H^1(X, L^{-1})\ne{0}$ does not necessarily
imply $L$ is a (pre-)Tango structure. 
But by Enriques-Severi-Zariski's  theorem, there exists $\ell>0$ 
such that  we have $H^1(X, L^{-p^{\ell+1}})=0$ but 
$H^1(X, L^{-p^\ell})\ne{0}$. Then such $L^{\ell}$ is at least 
a pre-Tango structure. Now based on these observations,
we obtain

\begin{Theorem}
\label{main}
Let $(X,L)$ be a smooth Calabi-Yau threefold  over a field $k$ 
of $\chara{k}=p\geq 5$ with Kodaira non-vanishing $H^1(X, L^{-1})\ne{0}$.
If $L^{\ell}$ is a Tango structure for some $\ell \geq 1$, then 
we have 
\begin{equation*}
H^1(X, H^{-1}) = H^0(X, H^{-1}\tensor(\rho_*\OO_Y/\OO_X))
\end{equation*}
for every ample line bundle
$H$ on $X$, where $\rho: Y\To X$ is a purely inseparable cover of degree $p$ 
corresponding to the Tango structure as in Theorem~\ref{Mu05:prop1.1}.
\end{Theorem}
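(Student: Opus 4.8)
The plan is to exhibit $Y$ as a smooth Fano threefold, apply the Shepherd-Barron vanishing theorem (Theorem~\ref{SBvanishing}) on $Y$, and then transport the computation back to $X$ through the structure sequence of the purely inseparable cover $\rho$. First I would fix the cover. Writing $L^{\ell}=\OO_X(D)$ for the Tango structure, Theorem~\ref{Mu05:prop1.1} furnishes a rank~$2$ bundle $E$ with $0\To\OO_X\To E\To\OO_X(D)\To 0$, the $\PP^1$-bundle $\pi:P=\PP(E)\To X$ with canonical section $F$, and the degree $p$ purely inseparable cover $\rho=\pi\vert_G:Y=G\To X$ satisfying $\OO_P(G)\iso\OO_P(pF-p\pi^*D)$ by Proposition~\ref{picprop}. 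Because $\OO_X(D)$ is a genuine Tango structure and not merely pre-Tango, Theorem~\ref{smoothnessofG} ensures that $Y=G$ is smooth.

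Second I would compute $K_Y$ by adjunction inside $P$. Since $\det E=\OO_X(D)$, one has $K_P\sim -2F+\pi^*(K_X+D)$, whence $K_Y=(K_P+G)\vert_G\sim\left((p-2)F+\pi^*(K_X-(p-1)D)\right)\vert_G$. As $G\cap F=\emptyset$ by Proposition~\ref{picprop}(1), the restriction $F\vert_G$ is trivial, and as $X$ is Calabi-Yau we have $K_X\sim 0$; therefore $K_Y\sim -(p-1)\rho^*D$. Since $D$ is ample and $\rho$ is finite, $-K_Y=(p-1)\rho^*D$ is ample, so $Y$ is a smooth Fano threefold over $k$ with $\chara k=p\geq 5$; being smooth, it is in particular normal and locally complete intersection. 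This is the pivotal step, since it is precisely what makes Theorem~\ref{SBvanishing} applicable to $Y$.

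Third I would use the structure sequence $0\To\OO_X\To\rho_*\OO_Y\To\rho_*\OO_Y/\OO_X\To 0$. Tensoring with the line bundle $H^{-1}$ and using that $\rho$ is finite (so $\rho_*$ is exact and $R^i\rho_*=0$ for $i>0$) together with the projection formula $\rho_*\OO_Y\tensor H^{-1}\iso\rho_*(\rho^*H^{-1})$, the associated long exact cohomology sequence contains the segment
\[
H^0(X,H^{-1})\To H^0(Y,\rho^*H^{-1})\To H^0(X,H^{-1}\tensor(\rho_*\OO_Y/\OO_X))\To H^1(X,H^{-1})\To H^1(Y,\rho^*H^{-1}),
\]
where $\rho^*H^{-1}$ means $(\rho^*H)^{-1}$.

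Finally I would annihilate the outer terms. Ampleness of $H$ gives $H^0(X,H^{-1})=0$, and ampleness of $\rho^*H$ on $Y$ (the finite pullback of an ample bundle is ample) gives $H^0(Y,(\rho^*H)^{-1})=0$; moreover $\rho^*H$ is ample and $Y$ is a normal locally complete intersection Fano threefold, so Theorem~\ref{SBvanishing} yields $H^1(Y,(\rho^*H)^{-1})=0$. The sequence then collapses to the desired isomorphism $H^1(X,H^{-1})\iso H^0(X,H^{-1}\tensor(\rho_*\OO_Y/\OO_X))$. I expect the Fano claim of the second step to be the only genuine obstacle: everything afterwards is formal once Shepherd-Barron is available on $Y$, and one sees that the Calabi-Yau hypothesis enters solely through $K_X\sim 0$ in the adjunction, while the Tango (as opposed to pre-Tango) hypothesis enters solely through the smoothness of $Y$ via Theorem~\ref{smoothnessofG}.
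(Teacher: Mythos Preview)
Your proof is correct and follows essentially the same route as the paper: show $Y$ is a smooth Fano threefold, invoke Shepherd-Barron vanishing on $Y$, and read off the isomorphism from the long exact sequence of $0\to\OO_X\to\rho_*\OO_Y\to\rho_*\OO_Y/\OO_X\to 0$ tensored with $H^{-1}$. The only cosmetic difference is that you compute $K_Y$ by explicit adjunction inside $P$ (using $G\cap F=\emptyset$ to kill $F\vert_G$), whereas the paper simply quotes the formula $\omega_Y\iso\rho^*(\omega_X\tensor L^{-p+1})$ from Koll\'ar~II~6.1.6; both give $K_Y\sim -(p-1)\rho^*D$.
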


\begin{proof} By taking a sufficiently large power $L^{\ell}$, $\ell\gg 0$, 
we can assume from the beginning that $H^1(X, L^{-p})=0$.
Also, by the assumption we can assume that $L$ is a Tango structure.
Then by Theorem~\ref{Mu05:prop1.1}
we have a purely inseparable cover $\rho:Y\To X$ of degree $p$ and 
$\omega_Y \iso \rho^*(\omega_X \tensor L^{-p+1})
\iso (\rho^*L)^{-p+1}$, see II~6.1.6~\cite{Ko}.
Since $\rho$ is a finite morphism and $L$ is ample, $\rho^*L$ is also ample.
Thus we know that $Y$ is an integral Fano threefold. Also since $L$ 
is a Tango structure, $Y$ is smooth by Theorem~\ref{smoothnessofG}.
Now let $H$ be an arbitrary ample line bundle on $X$. Then,
since $\rho$ is surjective, we have the following exact sequence
\begin{equation*}
  0\To H^{-1} \To H^{-1}\tensor \rho_*\OO_Y \To H^{-1}\tensor\rho_*\OO_Y/\OO_X\To 0,
\end{equation*}
from which we obtain the long exact sequence
\begin{equation*}
H^0(X, \rho_*\rho^*H^{-1})\To H^0(X, H^{-1}\tensor\rho_*\OO_Y/\OO_X)
\To H^1(X, H^{-1}) \To H^1(X, \rho_*\rho^*H^{-1}).
\end{equation*}
Now, we have $H^0(X, \rho_*\rho^*H^{-1}) = H^0(Y, \rho^*H^{-1})=0$
since $\rho$ is finite and $H$ is ample.
Also $H^1(X, \rho_*\rho^*H^{-1}) = H^1(Y, \rho^*H^{-1})$
and this is $0$ by Theorem~\ref{SBvanishing}.
\end{proof}

Recall that  for a purely inseparable cover $p : Y\To X$ of degree $p$ there exists 
a $p$-closed rational vector field $D$ on $X$ such that 
$(\rho_*\OO_Y)^D := \{ f\in \rho_*\OO_Y\;:\; D(f)=0\} = \OO_X$ (cf. \cite{russ}). Thus we have 

\begin{Corollary}
\label{mainsub}
Under the same assumption as 
Theorem~\ref{main},  we have 
\begin{equation*}
H^1(X, H^{-1}) = H^0(X, H^{-1}\tensor{D(\rho_*\OO_Y)})
\end{equation*}
where $D$ is a $p$-closed rational vector field on $X$
corresponding to the purely inseparable cover $\rho$.
\end{Corollary}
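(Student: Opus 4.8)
The plan is to deduce the formula directly from Theorem~\ref{main}, the only remaining task being to identify the two $\OO_X$-modules $\rho_*\OO_Y/\OO_X$ and $D(\rho_*\OO_Y)$. The input is the recalled fact that the $p$-closed rational vector field $D$ attached to the purely inseparable cover $\rho$ satisfies $(\rho_*\OO_Y)^D=\OO_X$, where $D$ is viewed as a derivation acting on $\rho_*\OO_Y$.

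First I would note that $D$ is $\OO_X$-linear when regarded as a map $\rho_*\OO_Y\To\rho_*\OO_Y$: since $\OO_X$ lies in $\Ker D$ and $D$ is a derivation, for local sections $f$ of $\OO_X$ and $s$ of $\rho_*\OO_Y$ we get $D(fs)=D(f)\,s+f\,D(s)=f\,D(s)$. Hence $D$ is an $\OO_X$-module homomorphism with kernel exactly $(\rho_*\OO_Y)^D=\OO_X$ and image the subsheaf $D(\rho_*\OO_Y)$, so the exact sequence $0\To\OO_X\To\rho_*\OO_Y\overset{D}{\To}D(\rho_*\OO_Y)\To 0$ furnishes an isomorphism of $\OO_X$-modules $\rho_*\OO_Y/\OO_X\iso D(\rho_*\OO_Y)$. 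Because $H^{-1}$ is invertible, tensoring preserves this isomorphism, and applying $H^0(X,-)$ identifies $H^0(X,H^{-1}\tensor(\rho_*\OO_Y/\OO_X))$ with $H^0(X,H^{-1}\tensor D(\rho_*\OO_Y))$. Substituting into the formula of Theorem~\ref{main} yields the claim.

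The step I expect to be the main obstacle is making sense of $D(\rho_*\OO_Y)$ as a genuine coherent $\OO_X$-submodule, given that $D$ is only a \emph{rational} vector field and a priori could send a regular section of $\rho_*\OO_Y$ to a section with poles. This is precisely the content of the phrase ``$D$ corresponding to the cover $\rho$'': the representative $D$ must be chosen so that it preserves $\rho_*\OO_Y$ and has coherent image, which requires controlling its behaviour along the locus where the foliation $\OO_X\cdot D\subset T_X$ degenerates. The local model $\rho_*\OO_Y=\OO_X[t]/(t^p-a)$ with $D=\partial_t$, where $\Ker D=\OO_X$ and $D$ carries the rank $p-1$ module $\rho_*\OO_Y/\OO_X$ isomorphically onto its image $\Im D$ of the same rank, already exhibits the mechanism; verifying that these local pictures glue—so that the kernel--image sequence above is exact as sheaves—is the point that needs the compatibility between $D$ and $\rho$, rather than any new cohomological vanishing.
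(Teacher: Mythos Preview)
Your argument is correct and matches the paper's approach: the paper simply recalls the fact $(\rho_*\OO_Y)^D=\OO_X$ and states the corollary without further detail, so your identification $\rho_*\OO_Y/\OO_X\iso D(\rho_*\OO_Y)$ via the $\OO_X$-linear derivation $D$ followed by substitution into Theorem~\ref{main} is exactly what is intended. Your additional remarks on the coherence of $D(\rho_*\OO_Y)$ and the gluing of the local model are more careful than the paper itself, which leaves these points implicit.
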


\section*{Acknowledgements}
The author thanks the referee for many careful comments. 
He also thanks  Prof.~Dr.~Holger~Brenner for stimulating discussions.


\end{document}